\newtoks\prt
\numberwithin{equation}{section}
\newtheorem{thm}{Theorem}[section]
\newtheorem{lemma}[thm]{Lemma}
\newtheorem{cor}[thm]{Corollary}
\newtheorem{example}[thm]{Example}
\theoremstyle{definition}
\def\eqn#1$$#2$${\begin{equation}\label#1#2\end{equation}}
\def\fra{\mathfrak{A}}
\def\A{\mathcal A}
\def\B{\mathcal B}
\def\C{\mathcal C}
\def\E{\mathcal E}
\def\R{\mathcal R}
\def\F{\mathcal F}
\def\G{\mathcal G}
\def\W{\mathcal W}
\def\e{e^\ast}
\def\E{E^\ast}
\def\H{\mathcal H}
\def\ep{\varepsilon}
\def\en{\mathbb N}
\def\er{\mathbb R}
\def\ov{\overline}
\def \Ch {\operatorname{Ch}}
\def \reg {\partial _{\kern1pt\text{reg}}}
\def\la{\langle}
\def\ra{\rangle}
\newcommand{\norm}[1]{\left\|#1\right\|}
\newcommand{\abs}[1]{\left| #1  \right|}
\begin{document}

\title[On the Banach-Mazur distance between continuous function spaces]
{On the Banach-Mazur distance between continuous function spaces with scattered boundaries}

\author{Jakub Rondo\v s}

\address{Charles University\\
Faculty of Mathematics and Physics\\
Department of Mathematical Analysis \\
Sokolovsk\'{a} 83, 186 \ 75\\Praha 8, Czech Republic}

\email{jakub.rondos@gmail.com}

\subjclass[2010]{46B03; 46A55; 46E40}

\keywords{function space; vector-valued Amir-Cambern theorem; scattered space; Banach-Mazur distance; closed boundary}

\thanks{The research was supported by the Research grant GAUK 864120}

\begin{abstract}
We study the dependence of the Banach-Mazur distance between two subspaces of vector-valued continuous functions on the scattered structure of their boundaries. In the spirit of a result of Gordon \cite{Gordon3}, we show that the constant $2$ appearing in the Amir-Cambern theorem may be replaced by $3$ for some class of subspaces. This we achieve by showing that the Banach-Mazur distance of two function spaces is at least 3, if the height of the set of weak peak points of one of the spaces is larger than the height of a closed boundary of the second space. Next we show that this estimate can be improved, if the considered heights are finite and significantly different. As a corollary, we obtain new results even for the case of $\C(K, E)$ spaces.
\end{abstract}

\maketitle

\section{Introduction}

For a locally compact (Hausdorff) space $K$ and a real Banach space $E$, let $\C_0(K,E)$ denote the space of all continuous $E$-valued functions vanishing at infinity endowed with the sup-norm
\[\norm{f}_{\sup}=\sup_{x \in K} \norm{f(x)}, \quad f \in \C_0(K,E).\]
If $K$ is compact, then this space will be denoted by $\C(K,E)$. We write $\C_0(K)$ for $\C_0(K, \er)$ and $\C(K)$ for $\C(K, \er)$. All unexplained notions and definitions are contained in the next section.

We start with the following generalization of the well-known Banach-Stone theorem given independently by Amir \cite{amir} and Cambern \cite{cambern}. They showed that compact spaces $K_1$ and $K_2$ are homeomorphic if there exists an isomorphism $T\colon \C(K_1)\to\C(K_2)$ with $\norm{T} \norm{T^{-1}}<2$. Moreover, Amir conjectured that the number $2$ may be replaced by $3$. Cohen \cite{cohen-bound2} showed that this conjecture is not true in general. However, in \cite{Gordon3}, Gordon proved that it is true in the class of countable compact spaces: If $K_1$, $K_2$ are nonhomeomorphic countable compact spaces and $T: \C(K_1) \rightarrow \C(K_2)$ is an isomorphism, then $\norm{T}\norm{T^{-1}} \geq 3$.

The result of Gordon was extended in \cite[Theorem 1.5]{CandidoGalego3}, where the authors show that if $E$ is a Banach space having non-trivial cotype, and such that for every $n \in \en$, $E^n$ contains no subspace isomorphic to $E^{n+1}$, then countable compact spaces $K_1$ and $K_2$ are homeomorphic provided there exists an isomorphism $T:\C(K_1, E) \rightarrow \C(K_2, E)$ with $\norm{T}\norm{T^{-1}}<3$. It is clear that every finite-dimensional Banach space satisfies the above condition, and the authors also show in \cite[Remark 4.1]{CandidoGalego3} that there exist many infinite-dimensional Banach spaces that satisfy it. 

Starting in \cite{chuco}, and continuing in \cite{lusppams}, \cite{dosp}, and \cite{rondos-spurny-spaces}, the theorem of Amir and Cambern was extended to the context of subspaces. The final result for subspaces of scalar functions (see \cite[Theorem 1.1]{rondos-spurny-spaces}), reads as follows. For $i=1,2,$ let $\H_i \subseteq \C_0(K_i)$ be closed subspaces such that all points in their Choquet boundaries are weak peak points. If there exists an isomorphism $T\colon \H_1\to \H_2$ with $\norm{T} \norm{T^{-1}}<2$, then their Choquet boundaries $\Ch_{\H_i} K_i$ are homeomorphic (we recall that $x\in K_i$ is a \emph{weak peak point} (with respect to $\H_i$) if for a given $\ep\in (0,1)$ and a neighborhood $U$ of $x$ there exists a function $h\in B_{\H_i}$ such that $h(x)>1-\ep$ and $\abs{h}<\ep$ on $\Ch_{\H_i} K_i\setminus U$). Vector-valued extension of this result was given in \cite{rondos-spurny-vector-spaces}.

For a closed subspace $\H$ of $\C(K, E)$ we denote by $\W_{\H}$ the set of weak peak points of $K$ with respect to $\H$ (for the definition, which is similar to the one for scalar subspaces, see next section), and we consider the set
\[\Omega_{\H}=\{ y \in K, \exists g \in \H \colon g(y) \neq 0\}.\] 
The notion of the set $\Omega_{\H}$ will be useful, as among other things, it will allow us to work efficiently at the same with functions defined on compact spaces as well as on locally compact spaces. We also note that since $\Omega_{\H}$ is an open set in $K$, if $F^{(\alpha)} \cap \Omega_{\H}$ is finite for a subset $F$ of $K$ and an ordinal $\alpha$, then 
$F^{(\alpha+1)} \cap \Omega_{\H}$ is empty. A set $\B \subseteq K$ is a \emph{boundary} for $\H$ if for each $h \in \H$, $\norm{h}=\sup_{x \in \B} \norm{h(x)}$.

In view of the above results for subspaces, it seems natural to be looking for an extension of the result of Gordon to this context. We obtain such an extension as a corollary of the following theorem, that is inspired by \cite[Theorem 2.1]{CandidoGalego3} and \cite[Theorem 1.7]{GalegoVillamizar}. Also note that a similar result has been proved in \cite[Theorem 4]{VillamizarFelipe} for the case when $\H_1$ is an extremely regular subalgebra of $\C_0(K_1)$ and $\H_2=\C_0(K_2, E)$ (we recall that a subspace $\H \subseteq \C_0(K)$ is \emph{extremely regular} if for each $x \in K$, $U$ open neighbourhood of $x$ and $\ep>0$ there exists a function $h \in B_{\H}$ such that $h(x)=1$ and $\abs{h}<\ep$ on $K \setminus U$).

\begin{thm}
	\label{jednostranna}
	For $i=1, 2$, let $\H_i$ be a closed subspace of $\C(K_i, E_i)$, where $K_i$ is a compact Hausdorff space and $E_i$ is a Banach space. Let $E_2$ do not contain an isomorphic copy of $c_0$, and let $\B$ be a closed boundary for $\H_2$. Suppose that there exists an into isomorphism $T:\H_1 \rightarrow \H_2$ with $\norm{T}\norm{T^{-1}}<3$. Then:
	\begin{itemize}
		\item[a)] If $\alpha$ is an ordinal such that $\W_{\H_1}^{(\alpha)}$ is nonempty, then so is $\B^{(\alpha)} \cap \Omega_{\H_2}$.
		\item[b)] If $\alpha$ is an ordinal such that $\B^{(\alpha)} \cap \Omega_{\H_2}$ is finite, then so is $\W_{\H_1}^{(\alpha)}$.
		\item[c)] Let $\alpha$ be an ordinal such that $\B^{(\alpha)} \cap \Omega_{\H_2}$ is finite. Then $E_2^m$ contains an isomorphic copy of $E_1^n$, where $\abs{\W_{\H_1}^{(\alpha)}}=n$, $\abs{\B^{(\alpha)} \cap \Omega_{\H_2}}=m$. In particular, if $E_1=E_2$ is a finite-dimensional space, then $n \leq m$.
		\item[d)] Let $\H_1$ contains the constant functions. Then for an ordinal $\alpha$ such that $\B^{(\alpha)} \cap \Omega_{\H_2}$ is infinite it holds that 
		$\abs{\W_{\H_1}^{(\alpha)}} \leq \abs{\B^{(\alpha)} \cap \Omega_{\H_2}}$.
	\end{itemize}
\end{thm}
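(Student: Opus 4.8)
The plan is to treat all four assertions uniformly by extracting, from the hypothesis $\|T\|\,\|T^{-1}\|<3$, a correspondence between the weak peak points of $\H_1$ and the points of the boundary $\B$ that respects the Cantor--Bendixson derivative, in the style of the Amir--Cambern/Gordon arguments as adapted to subspaces in \cite{rondos-spurny-spaces,rondos-spurny-vector-spaces}. After rescaling we may assume $\|T\|<3$ and $\|T^{-1}\|\le 1$, i.e. $\|f\|\le\|Tf\|\le(3-\delta)\|f\|$ for some $\delta>0$. For a weak peak point $x\in\W_{\H_1}$ and a functional $e^\ast\in E_1^\ast$ of norm one with $e^\ast$ ``peaking'' together with a suitable $h\in B_{\H_1}$ at $x$, we push forward the evaluation-type functional $f\mapsto e^\ast(f(x))$ through $(T^{-1})^\ast$ to obtain an element $\mu$ of $\H_2^\ast$, which we realize (via Hahn--Banach and the description of $\C(K_2,E_2)^\ast$) as an $E_2^\ast$-valued measure supported on $\B$; the norm bounds force $\|\mu\|$ to be close to $1$ while $\|T(h)\|$ is close to $1$, and the $c_0$-avoidance of $E_2$ is what lets us conclude (via an Elton--Rosenthal / Bessaga--Pe\l czy\'nski type summability argument) that $\mu$ concentrates its mass near a single point $\pi(x)\in\B$. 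This is the heart of the matter and the main obstacle: showing that the ``pushed'' measure cannot be spread over many points without violating $\|T\|\,\|T^{-1}\|<3$, and doing so with $3$ rather than $2$ as the threshold requires the kind of three-function interpolation estimate used by Gordon and by Candido--Galego, now carried out with $E_2$-valued measures.

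Granting the assignment $x\mapsto\pi(x)\in\B\cap\Omega_{\H_2}$, the next step is to establish its key regularity properties: (i) if $x_1\neq x_2$ are weak peak points that are ``far apart'' in the sense of admitting disjointly supported peaking functions, then $\pi(x_1)\neq\pi(x_2)$; and (ii) $\pi$ is continuous in the appropriate sense and, crucially, cannot drop the Cantor--Bendixson rank, i.e. $\pi\big(\W_{\H_1}^{(\alpha)}\big)\subseteq \B^{(\alpha)}\cap\Omega_{\H_2}$ for every ordinal $\alpha$. Property (ii) is proved by transfinite induction on $\alpha$: a limit stage is immediate from $\W^{(\lambda)}=\bigcap_{\beta<\lambda}\W^{(\beta)}$ and the analogous identity for $\B$ together with continuity, and the successor stage uses that a point of $\W_{\H_1}^{(\alpha+1)}$ is a limit of points of $\W_{\H_1}^{(\alpha)}$ whose images are distinct points of $\B^{(\alpha)}$, so the image point is not isolated in $\B^{(\alpha)}\cap\Omega_{\H_2}$, hence lies in $(\B^{(\alpha)}\cap\Omega_{\H_2})'\subseteq \B^{(\alpha+1)}\cap\Omega_{\H_2}$ (using the remark from the introduction that $\Omega_{\H_2}$ is open, so that $(\B^{(\alpha)}\cap\Omega_{\H_2})'\cap\Omega_{\H_2}=\B^{(\alpha+1)}\cap\Omega_{\H_2}$).

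With this rank-preserving injection in hand the four statements follow. For (a): if $\W_{\H_1}^{(\alpha)}\neq\emptyset$, pick $x$ in it; then $\pi(x)\in\B^{(\alpha)}\cap\Omega_{\H_2}$, so that set is nonempty. For (b): if $\B^{(\alpha)}\cap\Omega_{\H_2}$ is finite then its preimage under the (on far-apart points) injective $\pi$ must be finite as well; here one needs the quantitative refinement that only boundedly many weak peak points can be mapped near any single boundary point — again a consequence of the norm estimate, since too many weak peak points clustering over one point of $\B$ would let one build $\ell_1^N$-type configurations in $\H_1$ whose $T$-images live essentially over one point of $\B$, forcing $\|T\|\,\|T^{-1}\|\ge 3$ — and this is really the same obstacle as above, now used to get a uniform bound, yielding finiteness of $\W_{\H_1}^{(\alpha)}$. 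For (c): refine the previous count — the configuration of $n=|\W_{\H_1}^{(\alpha)}|$ peaking functions in $\H_1$, under $T$, produces $n$ almost-disjointly-supported $E_2$-valued functions over the $m=|\B^{(\alpha)}\cap\Omega_{\H_2}|$ points, and reading off the evaluations at those $m$ points gives a near-isometric embedding of $E_1^n$ (with an $\ell_\infty^n$-type norm) into $E_2^m$; the finite-dimensional case $E_1=E_2$ then gives $n\le m$ by invariance of dimension. For (d): when $\H_1\ni\mathbf 1$, constants give extra peaking functions of norm one everywhere, which upgrades the embedding of (c) to the sharp inequality $|\W_{\H_1}^{(\alpha)}|\le|\B^{(\alpha)}\cap\Omega_{\H_2}|$ even when both sides are infinite, by a cardinality/compactness argument applied to all finite subsets. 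Throughout, the one genuinely hard ingredient — used in (b), (c) and (d) in increasingly quantitative forms — is the concentration estimate for the push-forward $E_2^\ast$-valued measures, and this is exactly where the non-containment of $c_0$ in $E_2$ and the slack below $3$ are consumed.
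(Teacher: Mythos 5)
Your overall strategy is recognizably the one the paper follows (a rank-preserving correspondence from weak peak points into the boundary $\B$, driven by a three-function interpolation estimate and by the non-containment of $c_0$ in $E_2$), but the central step as you state it contains a genuine gap: you claim that the push-forward of the evaluation functional at a weak peak point $x$ concentrates near a \emph{single} point $\pi(x)\in\B$, and that $\pi$ is injective on pairs of points admitting disjointly supported peaking functions. Neither claim is provable from $\norm{T}\norm{T^{-1}}<3$, and neither is what the argument actually needs. What the norm bound yields is only a nonempty compact \emph{set} of candidate image points: for peaking data $(h_i,x_i,e_i)$ one considers the family of all competitor peaking tuples $(g_i)$ dominated by $h_i+\ep$, and the set $\Lambda((x_i))=\B\cap\bigcap_{(g_i)}\R(T(\sum_i g_i\otimes e_i),\ep)$; the bound $3$ enters exactly once, in showing via the test functions $2f_i+h_i$ (of norm $\approx 3$ at $x_{i_0}$, while $2f_i+h_i-2g_i$ has norm $\approx 1$) that this family has the finite intersection property, hence $\Lambda\neq\emptyset$ by compactness. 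There is no way to shrink $\Lambda$ to a point, and injectivity fails: the correct substitute is the weaker statement that no single $y\in\B$ can belong to $\Lambda(x^k)$ for \emph{infinitely many} pairwise-separated weak peak points $x^k$, and this is where $c_0\not\hookrightarrow E_2$ is used (a weakly unconditionally Cauchy series argument), not the norm bound as you suggest for part b). That your single-valued, injective $\pi$ is too strong is visible from the conclusions themselves: it would give $\abs{\W_{\H_1}^{(\alpha)}}\le\abs{\B^{(\alpha)}\cap\Omega_{\H_2}}$ for finite sets with no hypothesis on $E_1,E_2$, whereas part c) genuinely requires the operator $S\colon E_1^n\to E_2^m$, $S(e_1,\dots,e_n)=T(\sum_i h_i\otimes e_i)|_{\B^{(\alpha)}\cap\Omega_{\H_2}}$, to transfer the count, and part d) needs $\B^{(\alpha)}\cap\Omega_{\H_2}$ infinite precisely because one boundary point may serve finitely many weak peak points.

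Your successor step in the transfinite induction inherits the same problem: you argue that the image of a point of $\W_{\H_1}^{(\alpha+1)}$ is a limit of \emph{distinct} images of points of $\W_{\H_1}^{(\alpha)}$, but distinctness is unavailable. The repair is to pick, for a net $x_\lambda\to x$ with $x_\lambda\in\W_{\H_1}^{(\alpha)}$, points $y_\lambda\in\Lambda(x_\lambda)\cap\B^{(\alpha)}$, pass to a convergent subnet $y_\lambda\to y\in\B^{(\alpha)}$, and rule out that the net is eventually constant by the no-infinite-intersection property above; this forces $y\in\B^{(\alpha+1)}$, and a separate continuity argument shows $y\in\Lambda(x)$. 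With these corrections your outline matches the paper's proof, but as written the existence, single-valuedness and injectivity of $\pi$ are unsupported and the argument does not go through.
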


We point out that in Theorem \ref{jednostranna}, the assumption of weak peak points is imposed only on one of the two considered function spaces. This may be surprising, as it is known that the Amir-Cambern theorem for subspaces fails completely, if the Choquet boundary of at least one of the two function spaces does not consist of weak peak points. Indeed, in \cite{hess}, Hess shows that for each $\ep>0$, there exists a space $\H_2 \subseteq \C([0, \omega])$ with $\Ch_{\H_2} K_2=[0, \omega)$, such that there exists an isomorphism $T: \H_1=\C([0, \omega]) \rightarrow \H_2$ with $\norm{T}\norm{T^{-1}}<1+\ep$. Here, of course, the space $\H_2$ does not satisfy the assumption of weak peak points. The reason why this example does not contradict Theorem \ref{jednostranna} is that in this case $\Omega_{\H_2}=[0, \omega]$, and the smallest closed boundary for $\H_2$ is $\B=\ov{\Ch_{\H_2} K_2}=[0, \omega]$, thus
\[\W_{\H_1}=[0, \omega]=\B \cap \Omega_{\H_2}.\]
Theorem \ref{jednostranna} shows that if the topological differences of the set of weak peak points of one space and a boundary of the other space are more significant than in this case, then such a thing is no longer possible. This example also shows that in Theorem \ref{jednostranna}, we cannot omit the assumption that the boundary is closed.

The following theorem extends \cite[Theorem 1.5]{CandidoGalego3}.

\begin{thm}
	\label{homeomorfni}
	For $i=1, 2$, let $K_i$ is a compact Hausdorff space, $E_i$ is a Banach space not containing an isomorphic copy of $c_0$, and $\H_i$ is a closed subspace of $\C(K_i, E_i)$ such that $\Ch_{\H_i} K_i$ is closed and countable, and consists of weak peak points. Let for each natural numbers $m<n$, $E_1^m$ does not contain an isomorphic copy of $E_2^n$, and $E_2^m$ does not contain an isomorphic copy of $E_1^n$. If there exists an isomorphism $T: \H_1 \rightarrow \H_2$ with $\norm{T}\norm{T^{-1}}<3$, then $\Ch_{\H_1} K_1$ is homeomorphic to $\Ch_{\H_2} K_2$.
\end{thm}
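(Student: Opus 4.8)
The plan is to deduce Theorem~\ref{homeomorfni} from Theorem~\ref{jednostranna} applied in both directions, combined with the classical structure theory of countable compact spaces. I would first dispose of the preliminary identifications. Every weak peak point of a function space lies in its Choquet boundary, so the assumption that $\Ch_{\H_i}K_i$ consists of weak peak points gives $\W_{\H_i}=\Ch_{\H_i}K_i$ for $i=1,2$; in particular $\W_{\H_i}$ coincides with the closed, countable set $\Ch_{\H_i}K_i$. Moreover $\Ch_{\H_i}K_i$ is a boundary for $\H_i$, hence a closed one, and since each of its points is a weak peak point we have $\Ch_{\H_i}K_i\subseteq\Omega_{\H_i}$; closedness then yields $(\Ch_{\H_i}K_i)^{(\alpha)}\cap\Omega_{\H_i}=(\Ch_{\H_i}K_i)^{(\alpha)}$ for every ordinal $\alpha$. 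If $\Ch_{\H_1}K_1=\emptyset$ then $\H_1=\{0\}$, so $\H_2=\{0\}$, so $\Ch_{\H_2}K_2=\emptyset$, and there is nothing to prove; hence I may assume both Choquet boundaries are nonempty. Each $\Ch_{\H_i}K_i$ is then a nonempty countable compact Hausdorff space, hence scattered (a nonempty compact Hausdorff space without isolated points is uncountable), so there is a least ordinal $\alpha_i$ for which $(\Ch_{\H_i}K_i)^{(\alpha_i)}$ is finite; by minimality of $\alpha_i$ and compactness of $\Ch_{\H_i}K_i$ this set is moreover nonempty, and I set $n_i=\abs{(\Ch_{\H_i}K_i)^{(\alpha_i)}}\ge 1$. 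By the Mazurkiewicz--Sierpi\'nski classification, a countable compact Hausdorff space is determined up to homeomorphism by its Cantor--Bendixson rank and the number of points of maximal rank, that is, by the pair $(\alpha_i,n_i)$; so it suffices to prove $\alpha_1=\alpha_2$ and $n_1=n_2$.

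Next I would invoke Theorem~\ref{jednostranna} for the isomorphism $T\colon\H_1\to\H_2$ with the closed boundary $\B=\Ch_{\H_2}K_2$ --- its hypotheses are met, since $E_2$ contains no copy of $c_0$, $\B$ is a closed boundary for $\H_2$, and $\norm{T}\norm{T^{-1}}<3$ --- and also for $T^{-1}\colon\H_2\to\H_1$ with the closed boundary $\Ch_{\H_1}K_1$, noting that $E_1$ contains no copy of $c_0$ and that the associated constant $\norm{T^{-1}}\norm{T}$ again equals $\norm{T}\norm{T^{-1}}<3$. Applying part~(a) to $T$ at $\alpha=\alpha_1$, and using $\W_{\H_1}=\Ch_{\H_1}K_1$ together with the identification above, I get that $(\Ch_{\H_2}K_2)^{(\alpha_1)}$ is nonempty; since $(\Ch_{\H_2}K_2)^{(\alpha_2+1)}=\emptyset$, this forces $\alpha_1\le\alpha_2$. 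The symmetric application of part~(a) to $T^{-1}$ at $\alpha=\alpha_2$ gives $\alpha_2\le\alpha_1$, and therefore $\alpha_1=\alpha_2=:\alpha$.

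Finally, $(\Ch_{\H_2}K_2)^{(\alpha)}$ is finite with $n_2$ elements, so part~(c) applied to $T$ at this $\alpha$ shows that $E_2^{n_2}$ contains an isomorphic copy of $E_1^{n_1}$, where $n_1=\abs{\W_{\H_1}^{(\alpha)}}=\abs{(\Ch_{\H_1}K_1)^{(\alpha)}}$. If $n_2<n_1$ this would contradict the assumption that $E_2^{m}$ contains no isomorphic copy of $E_1^{n}$ for $m<n$; hence $n_1\le n_2$. Applying part~(c) to $T^{-1}$ at $\alpha$ in the same way produces an isomorphic copy of $E_2^{n_2}$ inside $E_1^{n_1}$, and the other half of the assumption gives $n_2\le n_1$. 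Thus $n_1=n_2$, the invariants of $\Ch_{\H_1}K_1$ and $\Ch_{\H_2}K_2$ agree, and the two Choquet boundaries are homeomorphic.

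All the substantive work sits inside Theorem~\ref{jednostranna}; the present argument requires only the two preliminary identifications ($\W_{\H_i}=\Ch_{\H_i}K_i$, i.e.\ that weak peak points are Choquet boundary points, and $(\Ch_{\H_i}K_i)^{(\alpha)}\cap\Omega_{\H_i}=(\Ch_{\H_i}K_i)^{(\alpha)}$), the standard fact that rank and top cardinality form a complete homeomorphism invariant of countable compacta, and the bookkeeping that matches the ranks and cardinalities on the two sides. I do not expect a genuine obstacle: in contrast with the proof of Theorem~\ref{jednostranna}, nothing has to be constructed here --- the homeomorphism type is simply read off from the numerical invariants $(\alpha_i,n_i)$.
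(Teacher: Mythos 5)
Your proposal is correct and follows essentially the same route as the paper: identify $\W_{\H_i}=\Ch_{\H_i}K_i$ as a closed boundary contained in $\Omega_{\H_i}$, use the Mazurkiewicz--Sierpi\'nski classification of countable compacta by the pair (Cantor--Bendixson rank, number of top-level points), then apply Theorem~\ref{jednostranna}(a) symmetrically to $T$ and $T^{-1}$ to equate the ranks and Theorem~\ref{jednostranna}(c) symmetrically to equate the cardinalities. The only differences are cosmetic (the paper phrases the invariants via the model spaces $[1,\omega^{\alpha}m]$, while you work directly with $(\alpha_i,n_i)$ and treat the degenerate cases a bit more explicitly).
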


\begin{proof}
	Since $\Ch_{\H_1} K_1$ and $\Ch_{\H_2} K_2$ are countable compact Hausdorff spaces, there exist nonzero ordinals $\alpha, \beta$ and $m, n \in \en$ such that 
	$\Ch_{\H_1} K_1$ si homeomorphic to $[1, \omega^{\alpha}m]$ and $\Ch_{\H_2} K_2$ is homeomorphic to $[1, \omega^{\beta}n]$, see \cite[Proposition 8.6.5]{semadeni}. By using Theorem \ref{jednostranna} a) with $\W_{\H_1}=\Ch_{\H_1} K_1$, $\B=\Ch_{\H_2} K_2\subseteq \Omega_{\H_2}$, we obtain that $\alpha \leq \beta$. Thus $\alpha=\beta$ by symmetry. By using c) of Theorem \ref{jednostranna} twice it follows that also $m$ is equal to $n$, which concludes the proof. 
\end{proof}

If $\H$ is an extremely regular subspace of $\C(K)$, then it is clear that each point of $K$ is a weak peak point with respect to $\H$, and thus $\Ch_{\H} K=K$, see \cite[Lemma 2.5]{rondos-spurny-vector-spaces}. Thus extremely regular spaces serve as a simple example of spaces that have closed boundaries consisting of weak peak points.

It is natural to ask whether the assumption that the boundaries are closed is necessary in Theorem \ref{homeomorfni} (we remind that in the case of the Amir-Cambern theorem for subspaces, no topological assumptions need to be imposed on the boundaries, see \cite[Theorem 1.1]{rondos-spurny-spaces} or \cite[Theorem 1.1]{rondos-spurny-vector-spaces}). In section 3, we present a simple example showing that Theorem \ref{homeomorfni} does not hold without the assumption of closed Choquet boundaries (see Example \ref{Priklad}). 

Next we turn our attention to isomorphisms that are not necessarily bounded by the number $3$. It is well known that if two $\C(K)$ spaces are isomorphic, then the underlying compact spaces have the same cardinality, see \cite{cengiz} for the case of scalar functions and \cite{GalegoVillamizar} for an analogous result for vector-valued functions. Generalizations of those results to the context of subspaces were given in \cite{rondos-spurny-spaces} and \cite{rondos-spurny-vector-spaces}. 

Also, it is known that isomorphisms between $\C(K)$ spaces are somehow connected with the scattered structures of the underlying compact spaces. Indeed, it was proved in \cite[Theorem 1.4]{CandidoScattered} that if $E$ is a Banach space not containing an isomorphic copy of $c_0$, $K_2$ is a scattered locally compact space and $\C_0(K_1)$ embeds isomorphically into $\C_0(K_2, E)$, then $K_1$ is also scattered. 

Moreover, there have been proven estimates of the Banach-Mazur distance of $\C(K)$ spaces from $\C_0(\Gamma, E)$ spaces, where $\Gamma$ is a discrete set, and from $\C(F)$, where $F$ is a compact space of height $2$ (in particular for $F=[0, \omega]$), based on the height of the compact space $K$. 
It was proved in \cite[Theorem 1.2]{CandidoGalegoComega} that if $K$ is a compact space with $K^{(n)} \neq \emptyset$ for some $n \in \en$, $F$ is a compact space with $F^{(2)}=\emptyset$ and there exists an isomorphism $T: \C(K) \rightarrow \C(F)$, then $\norm{T}\norm{T^{-1}} \geq 2n-1$.  Moreover, if $\abs{K^{(n)}}>\abs{F^{(1)}}$, then $\norm{T}\norm{T^{-1}} \geq 2n+1$. In \cite[Theorem 1.1]{CANDIDOc0} it has been showed that if $\Gamma$ is an infinite discrete space, $E$ is a Banach space not containing an isomorphic copy of $c_0$ and $T: \C(K) \rightarrow \C_0(\Gamma, E)$ is an into isomorphism, then for each $n \in \en$, if $K^{(n)} \neq \emptyset$, then $\norm{T}\norm{T^{-1}} \geq 2n+1$. 
Similar results for isomorphisms with range in $\C_0(\Gamma, E)$ spaces were proven before in \cite{CandidoGalegoFund} and \cite{CandidoGalegoCotype}. In the following theorem, which generalizes the above results, we show that the estimate $3$ for the Banach-Mazur distance of two function spaces obtained in Theorem \ref{jednostranna} may be improved, if the heights of the set of weak peak points and of the closed boundary are finite and the height of the set of weak peak points is significantly larger.

\begin{thm}
	\label{distance}
	For $i=1, 2$, let $K_i$ be a compact space, $E_i$ be a Banach space and $\H_i$ be a closed subspace of $\C(K_i, E_i)$. Let $E_2$ do not contain an isomorphic copy of $c_0$, $\B$ be a closed boundary for $\H_2$ and $T: \H_1 \rightarrow \H_2$ be an into isomorphism. Let $l, k \in \en$, $l \geq k$. Let one of the following conditions hold.
	\begin{itemize}
		\item[(i)]  $\W_{\H_1}^{(l)}$ is nonempty and $\B^{(k)} \cap \Omega_{\H_2}$ is empty. 
		\item[(ii)] $\W_{\H_1}^{(l)}$ is infinite and $\B^{(k)} \cap \Omega_{\H_2}$ is finite.
		\item[(iii)]  $\W_{\H_1}^{(l)}$ and $\B^{(k)} \cap \Omega_{\H_2}$ are both finite, and $E_2^m$ does not contain an isomorphic copy of $E_1^n$, where $\abs{\W_{\H_1}^{(l)}}=n$, $\abs{\B^{(k)} \cap \Omega_{\H_2}}=m$. 	
	\end{itemize}
	Then $\norm{T}\norm{T^{-1}} \geq \max\{3, \frac{2l+2-k}{k}\}$.
\end{thm}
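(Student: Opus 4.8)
My plan is to dispose first of the bound $\norm{T}\norm{T^{-1}}\ge 3$, which I claim is already contained in Theorem~\ref{jednostranna}. Since $l\ge k$, we have $\W_{\H_1}^{(l)}\subseteq\W_{\H_1}^{(k)}$, so: under (i) the set $\W_{\H_1}^{(k)}$ is nonempty while $\B^{(k)}\cap\Omega_{\H_2}$ is empty, impossible by part a) once $\norm{T}\norm{T^{-1}}<3$; under (ii), $\W_{\H_1}^{(k)}$ is infinite while $\B^{(k)}\cap\Omega_{\H_2}$ is finite, contradicting part b); and under (iii), since $\B^{(k)}\cap\Omega_{\H_2}$ is finite, part b) first makes $\W_{\H_1}^{(k)}$ finite and then part c) with $\alpha=k$ forces an isomorphic copy of $E_1^{n'}$ into $E_2^m$ with $n'=\abs{\W_{\H_1}^{(k)}}\ge\abs{\W_{\H_1}^{(l)}}=n$, hence a copy of $E_1^n$, against the hypothesis of (iii). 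So the remaining task is to prove $\norm{T}\norm{T^{-1}}\ge\frac{2l+2-k}{k}$; since $\frac{2l+2-k}{k}\le 3$ precisely when $l\le 2k-1$, this carries new content only for $l\ge 2k$, and I would treat the case $\lambda:=\norm{T}\norm{T^{-1}}<3$ as already done.

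Next I would normalise, replacing $T$ by $\norm{T^{-1}}T$ so that $\norm{f}\le\norm{Tf}\le\lambda\norm{f}$ for all $f\in\H_1$, and argue by contradiction, assuming $3\le\lambda<\frac{2l+2-k}{k}$, i.e. $2(l+1)>k(\lambda+1)$ (so $l\ge 2k$). From here the argument should run parallel to that of Theorem~\ref{jednostranna}, the new ingredient being a sharper, quantitative version of how derived orders are transported through $T$. Concretely, I would fix $x\in\W_{\H_1}^{(l)}$ and, using that each point of $\W_{\H_1}^{(\alpha)}$ accumulates points of $\W_{\H_1}^{(\alpha-1)}$ and that all points of $\W_{\H_1}$ are weak peak points, build a tree of mutually separated points of depth $l$ rooted at $x$ whose node $x_s$ at distance $j$ from the root lies in $\W_{\H_1}^{(l-j)}$, together with weak peak functions $h_s\in B_{\H_1}$ satisfying $\norm{h_s(x_s)}>1-\ep_s$ and $\norm{h_s}<\ep_s$ off a prescribed neighbourhood of $x_s$, the errors $\ep_s$ chosen to decay rapidly along the tree. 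Passing to the $Th_s$ and tracking, level by level down the tree, where these functions are large on the closed boundary $\B$, I would aim for a transfer principle of the following shape: to produce a point of $\B\cap\Omega_{\H_2}$ of derived order $j+1$ from one of order $j$ it suffices to descend $\lceil(\lambda+1)/2\rceil$ consecutive derived levels of $\W_{\H_1}$ — this being the exact rate at which $\norm{f}\le\norm{Tf}\le\lambda\norm{f}$ and the triangle inequality balance. Iterating this $k$ times then consumes at most $\frac{k(\lambda+1)}{2}<l+1$ of the derived levels $\W_{\H_1}^{(0)},\dots,\W_{\H_1}^{(l)}$ sitting below $x$, and yields a point of $\B^{(k)}\cap\Omega_{\H_2}$ under (i), infinitely many such points under (ii), and at least $n$ of them carrying an isomorphic copy of $E_1^n$ inside $E_2^m$ under (iii) — in each case contradicting the hypothesis, as wanted.

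The hypothesis that $E_2$ contains no copy of $c_0$ would enter, as in the proof of Theorem~\ref{jednostranna}, in the limiting steps: when taking subnets of the points of $\B$ attached to the branches of the tree in order to land in higher derived sets, one needs the auxiliary series $\sum_s\alpha_s\,Th_s$ to converge, and this holds because $(\alpha_s\,Th_s(y))_s$ is weakly unconditionally Cauchy in $E_2$ for each $y$, hence summable once $E_2$ contains no copy of $c_0$; the resulting element of $\H_2$ is then what lets one see that the limit points actually lie in $\Omega_{\H_2}$, and, in cases (ii) and (iii), that there are enough of them and that they carry the embedding $E_1^n\hookrightarrow E_2^m$ (exactly as in parts b) and c) of Theorem~\ref{jednostranna}).

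The hard part will be the transfer principle with the sharp rate $\lceil(\lambda+1)/2\rceil$ per derived level of $\B$: I expect this to demand a careful choice of the error parameters $\ep_s$ and a careful propagation of the triangle-inequality estimates through all $l$ levels of the tree, arranged so that exactly this rate — and no worse one — is lost at each of the $k$ derived levels of $\B$, so that the count closes precisely at $2(l+1)=k(\lambda+1)$, i.e. at $\lambda=\frac{2l+2-k}{k}$. This optimization of the estimates already underlying Theorem~\ref{jednostranna} is, I believe, the sole source of the improvement of the constant $3$ to $\frac{2l+2-k}{k}$.
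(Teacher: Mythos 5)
Your reduction of the bound $3$ to Theorem~\ref{jednostranna} is fine, and you have correctly isolated the arithmetic $2(l+1)>k(\lambda+1)$ that must fail. But the rest of the proposal is a plan, not a proof: the entire content of the improvement from $3$ to $\frac{2l+2-k}{k}$ lies in the ``transfer principle'' that you only conjecture, and you say yourself that you do not know how to establish it. Moreover, as stated (one derived level of $\B$ gained per $\lceil(\lambda+1)/2\rceil$ consecutive derived levels of $\W_{\H_1}$, iterated $k$ times up a branching tree) it does not match the mechanism that actually works, and I see no way to make that local, level-by-level transfer close at the sharp constant. The paper's argument is global in the $l+1$ levels: one first uses the finiteness (or emptiness) of $\B^{(k)}\cap\Omega_{\H_2}$ to produce, for each $j$, a single \emph{chain} of functions $h_j^0,\dots,h_j^l$ with $h_j^i<h_j^{i+1}+\zeta$, each $h_j^i$ peaking at a point of $\W_{\H_1}^{(i)}$, arranged so that \emph{every} point $y\in\B$ satisfies $\norm{\sum_j T(h_j^i\otimes e_j)(y)}\ge\zeta$ for at most $k$ indices $i$ (Lemma~\ref{system}; this is a delicate reverse induction whose inductive invariant is that any $s+1$ of the sets $\R(\sum_j T(h_j^{r}\otimes e_j),\ep)$ have intersection missing $\B^{(\max\{k-s,0\})}$, and it is here, via Lemma~\ref{finite}, that the $c_0$-freeness of $E_2$ enters). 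One then shows (Lemma~\ref{nonempty}) that if $\norm{T}<\frac{s+m}{s-m}$ with $s=l+1$, $m=l+1-k$, there is a point $y\in\B$ at which $T$ applied to the sum of \emph{any} admissible $m$-subfamily of the chain is at least $\ep$ in norm; the constant $\frac{s+m}{s-m}$ arises from comparing $\norm{\sum_j(2\sum_{r\le m}h_j^r+\sum_{r>m}h_j^r)\otimes e_j}\approx s+m$ with $\norm{\sum_j(2\sum_{r\le m}h_j^r+\sum_{r>m}h_j^r-2\sum_i g_j^i)\otimes e_j}\approx s-m$. The contradiction is then a pigeonhole at that single $y$: it has $l+1-k=m$ ``small'' indices, whose sum must simultaneously have $T$-image of norm $\ge\ep$ and $<(l-k+1)\zeta=\ep$. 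None of these three steps — the chain construction with the ``at most $k$ large indices'' property, the weighted-sum norm estimate producing $\frac{s+m}{s-m}$, or the final pigeonhole — appears in your proposal, so the gap is not a matter of polishing error parameters but of supplying the argument itself.

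In addition, your Claim-level treatment of case (iii) needs the step the paper makes explicit: one must first use the non-embeddability of $E_1^n$ into $E_2^m$ to produce vectors $e_1,\dots,e_n$ for which $\norm{T(\sum_i h_i\otimes e_i)}$ is uniformly small on the finite set $\B^{(k)}\cap\Omega_{\H_2}$ (via unboundedness of the inverse of the evaluation operator $S$); without this the chain construction cannot even be started in case (iii).
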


We show that the lower bounds that we obtain here are the same as in \cite{CandidoGalegoComega} and \cite{CANDIDOc0}. Thus let $K_1$ be a compact space with $K^{(n)}$ nonempty and $\H_1=\C(K_1)$. Then $\W_{\H_1}=K_1$ by the Urysohn Lemma. If $\H_2=\C(F)$, where $F^{(2)}=\emptyset$, then let $\B=F$. Since $K^{(n-1)}$ is infinite and $\B^{(1)}$ is finite, it follows that from (ii) with $l=n-1$ and $k=1$ that we obtain the same lower bound as in \cite[Theorem 1.2]{CandidoGalegoComega}. From (iii) with $E_1=E_2=\er$, $l=n$ and $k=1$, we also obtain the same bound in the case when $\abs{K^{(n)}}>\abs{F^{(1)}}$. Next, let $\H_2=\C_0(\Gamma, E)$, where $\Gamma$ is a discrete space. Let $K_2=\Gamma \cup \{\alpha\}$ be the one-point compactification of $\Gamma$. Then we may put $\B=K_2$, $k=1$, $l=n$, and then $\B^{(1)} \cap \Omega_{\H_2}=\{\alpha\} \cap \Omega_{\H_2}=\emptyset$, and from (i) we obtain the same estimate as in \cite[Theorem 1.1]{CANDIDOc0}.

From the fact that the term $\frac{2l+2-k}{k}$ appearing in Theorem \ref{distance} can happen to be arbitrary close to $1$, in contrast with the number $3$, it is clear that this estimate is in general far from being optimal. Nevertheless, the estimate might be reasonable in cases when $k$ is substantially smaller than $n$. For $k=1$ it is optimal, since the number is attained e.g. for $\H_1=\C([0, \omega^n], E)$, $\H_2=\C_0(\omega, E)$, where $E$ is a Banach space not containing an isomorphic copy of $c_0$, see \cite[Theorem 1.2]{CANDIDOc0}. Moreover, Theorem \ref{distance} has the following simple, but interesting consequence.

\begin{cor}
\label{finitefinite}
	For $i=1, 2$, let $K_i$ be a compact space, $E_i$ be a Banach space and $\H_i$ be a closed subspace of $\C(K_i, E_i)$. Let $E_2$ do not contain an isomorphic copy of $c_0$, $\B$ be a closed boundary for $\H_2$, and suppose that $\H_1$ is isomorphic to a subspace of $\H_2$. Then if $\B^{(k)} \cap \Omega_{\H_2}$ is empty for some $k \in \en$, then $ht(\W_{\H_1})$ is finite.
\end{cor}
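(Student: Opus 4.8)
The plan is to obtain the statement as a direct consequence of Theorem~\ref{distance}(i), by an argument by contradiction. Since $\H_1$ is assumed to be isomorphic to a subspace of $\H_2$, I fix an into isomorphism $T\colon \H_1 \to \H_2$ and set $C = \norm{T}\norm{T^{-1}}$, which is a finite number. Let $k \in \en$ be an index for which $\B^{(k)} \cap \Omega_{\H_2} = \emptyset$; such a $k$ exists by hypothesis.

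Now suppose, towards a contradiction, that $ht(\W_{\H_1})$ is infinite. By the definition of the Cantor--Bendixson height this forces $\W_{\H_1}^{(l)} \neq \emptyset$ for every $l \in \en$, and in particular $\W_{\H_1}^{(l)}$ is nonempty for all $l \geq k$. For each such $l$, condition (i) of Theorem~\ref{distance} is satisfied with the closed boundary $\B$ and the fixed $k$, so the theorem yields
\[ C = \norm{T}\norm{T^{-1}} \geq \frac{2l+2-k}{k}. \]
Letting $l \to \infty$, the right-hand side tends to $+\infty$, which contradicts the finiteness of $C$. Hence $ht(\W_{\H_1})$ must be finite, as claimed.

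I do not expect any real obstacle here: the corollary is essentially just the observation that the lower bound $\frac{2l+2-k}{k}$ in Theorem~\ref{distance} is unbounded in $l$ while the distance of two fixed isomorphic spaces is a fixed finite number. The only point requiring a word of care is the passage from ``infinite height of $\W_{\H_1}$'' to ``$\W_{\H_1}^{(l)} \neq \emptyset$ for every finite $l$'', which is immediate from the monotonicity of the derived-set operation together with the definition of the height as the least ordinal at which the derivatives stabilize (to $\emptyset$ in the scattered case, or to a nonempty perfect set otherwise; in either situation all finite derivatives are nonempty).
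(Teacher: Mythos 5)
Your proof is correct and is essentially the paper's own argument: the paper applies Theorem~\ref{distance}(i) directly to conclude that $\W_{\H_1}^{(l)}=\emptyset$ once $l>\frac{\norm{T}\norm{T^{-1}}k+k-2}{2}$, which is just the non-contradiction phrasing of your observation that the lower bound $\frac{2l+2-k}{k}$ is unbounded in $l$. No substantive difference.
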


\begin{proof}
	Let $T: \H_1 \rightarrow \H_2$ be an into isomorphism. Then by Theorem \ref{distance}, $\W_{\H_1}^{(l)}$ is empty for \[l>\frac{\norm{T}\norm{T^{-1}}k+k-2}{2}.\] Thus $ht(\W_{\H_1})$ is finite.	
\end{proof}

Notice that all the above results hold automatically also for subspaces $\H \subseteq \C_0(K, E)$, where $K$ is locally compact. Indeed, by \cite[Lemma 2.10]{rondos-spurny-vector-spaces}, if $J=K \cup \{\alpha\}$ is the one-point compactification of $K$, then $\H$ is isometric to a subspace $\widetilde{\H}$ of $J$, defined by
	\[
\widetilde{\H}=\{h\in \C(J,E)\colon h|_{K}\in \H\ \&\ h(\alpha)=0\}.
\]
Moreover, $\Ch_{\H} K$ is homeomorphic to $\Ch_{\widetilde{\H}} J$ and a point $x \in \Ch_{\H} K$ is a weak peak point with respect to $\H$ if and only if it is a weak peak point with respect to $\widetilde{\H}$. Also, it is clear that if $\B$ is a closed boundary for $\H$, then $\widetilde{\B}=\B \cup \{\alpha\}$ is a closed boundary for $\widetilde{\H}$, and $\widetilde{\B} \cap \Omega_{\widetilde{\H}}=(\B \cup \{\alpha\}) \cap \Omega_{\widetilde{\H}}=\B \cap \Omega_{\H}$. Thus the results valid for $\widetilde{\H}$ apply also on $\H$.

Next, we point out what the above results give for the case of $\C(K, E)$ spaces.

\begin{cor}
\label{Ckacka}
Let $K_1, K_2$ be infinite compact spaces, $E_1, E_2$ be Banach spaces, $E_2$ not containing an isomorphic copy of $c_0$. 
\begin{itemize}
	\item[(i)] If $l, k \in \en$, $l>k$, $ht(K_1)=l$, $ht(K_2)=k$, and there exists an into isomorphism $T:\C(K_1, E_1) \rightarrow \C(K_2, E_2)$, then \[\norm{T}\norm{T^{-1}} \geq \max\{3, \frac{2l-k-1}{k-1}\}.\]
	\item[(ii)]
	If $\C(K_1, E_1)$ embeds isomorphically into $\C(K_2, E_2)$ and $ht(K_2)$ is finite, then $ht(K_1)$ is finite.
\end{itemize} 
\end{cor}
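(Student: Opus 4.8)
The plan is to derive Corollary~\ref{Ckacka} directly from Theorem~\ref{distance} and Corollary~\ref{finitefinite} by specializing them to the full spaces $\H_i=\C(K_i,E_i)$. First I would record the standard identifications for $\H=\C(K,E)$: by the Urysohn lemma every point of $K$ is a weak peak point, so $\W_{\H}=K$ and $\W_{\H}^{(\alpha)}=K^{(\alpha)}$; evaluating a nonzero constant function shows $\Omega_{\H}=K$; and $\B:=K$ is trivially a closed boundary for $\H$, with $\B^{(\alpha)}\cap\Omega_{\H}=K^{(\alpha)}$. Since $E_2$ contains no copy of $c_0$, the hypotheses on $E_2$ and $\B$ in both cited results are automatically met.

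For part~(i) the only real point is to match the Cantor--Bendixson data to the index conventions of Theorem~\ref{distance}. If $ht(K)=j$ is finite, then $K^{(j-1)}$ is a nonempty compact set with no accumulation points (as $K^{(j)}=\emptyset$), hence finite and nonempty; consequently $K^{(j-2)}$ has nonempty derived set and is therefore infinite. Applied to $K_1$ and $K_2$ this gives that $\W_{\H_1}^{(l-2)}=K_1^{(l-2)}$ is infinite while $\B^{(k-1)}\cap\Omega_{\H_2}=K_2^{(k-1)}$ is finite. Because $K_1$ and $K_2$ are infinite compacta, their heights are at least $2$, so $k\ge 2$ and, since $l>k$, $l\ge 3$; thus $l-2,\,k-1\in\en$ and $l-2\ge k-1$. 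Hence I would apply Theorem~\ref{distance}(ii) with its parameters taken to be $l-2$ and $k-1$, obtaining
\[
\norm{T}\norm{T^{-1}}\ \ge\ \max\Bigl\{3,\ \frac{2(l-2)+2-(k-1)}{k-1}\Bigr\}\ =\ \max\Bigl\{3,\ \frac{2l-k-1}{k-1}\Bigr\}.
\]
I would emphasize that taking the derivative two steps lower (rather than one) on the $K_1$ side is what makes $\W_{\H_1}^{(l-2)}$ \emph{infinite}, so that case~(ii) of Theorem~\ref{distance} applies and no extra assumption relating $E_1$ and $E_2$ is required; this is the one slightly non-obvious choice in the argument.

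For part~(ii), set $k=ht(K_2)<\infty$; then $\B^{(k)}\cap\Omega_{\H_2}=K_2^{(k)}=\emptyset$, and $\C(K_1,E_1)$ is isomorphic to a subspace of $\C(K_2,E_2)$ by hypothesis, so Corollary~\ref{finitefinite} yields that $ht(\W_{\H_1})=ht(K_1)$ is finite. I do not expect any substantive obstacle: beyond the index bookkeeping in part~(i) just described, the proof uses only the elementary facts that $\W_{\C(K,E)}$, $\Omega_{\C(K,E)}$, and some closed boundary of $\C(K,E)$ may all be taken equal to $K$, together with the standing hypotheses that the $K_i$ are infinite and that $l>k$ — which are exactly what guarantees that the shifted indices $l-2$ and $k-1$ lie in $\en$ and satisfy $l-2\ge k-1$.
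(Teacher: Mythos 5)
Your proposal is correct and follows essentially the same route as the paper: identify $\W_{\C(K_i,E_i)}=\Omega_{\C(K_i,E_i)}=K_i$ with $\B=K_2$, apply Theorem~\ref{distance}(ii) with the shifted indices $l-2$ and $k-1$ (using that $K_1^{(l-2)}$ is infinite and $K_2^{(k-1)}$ is finite), and deduce (ii) as in Corollary~\ref{finitefinite}. Your explicit verification that $l-2\ge k-1\ge 1$ is a detail the paper leaves implicit, but the argument is the same.
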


\begin{proof}
By the Urysohn Lemma, it is clear that each point of $K_1$ is a weak peak point with respect to $\C(K_1, E_1)$, and consequently, $\Ch_{\H_1} K_1=K_1$, see \cite[Lemma 2.5]{rondos-spurny-vector-spaces}. We denote $\B=K_2$. Since $K_2^{(k-1)}$ is finite, and $K_1^{(l-2)}$ is infinite, with the use of Theorem \ref{distance}(ii) we obtain the estimates in the statement (i) (the bound $3$ was known before, see \cite[Theorem 2.1]{CandidoGalego3}). The statement (ii) follows in the same way as Corollary \ref{finitefinite}.
\end{proof}

\section{Auxiliary results}

This section contains the unexplained definitions and notations, as well as several simple results that will be frequently used later.

To start with, the cardinality of a set $M$ is denoted by $\abs{M}$. Next, the \emph{derivative} of a topological space $S$ is defined recursively as follows. The set $S^{(1)}$ is the set of accumulation points of $S$, and for an ordinal $\alpha>1$, let $S^{(\alpha)}=(S^{\beta})^{(1)}$, if $\alpha=\beta+1$, and $S^{(\alpha)}=\bigcap_{\beta<\alpha} S^{(\beta)}$, if $\alpha$ is a limit ordinal. Moreover, let $S^{(0)}=S$. The topological space $S$ is called \emph{scattered} if there exists an ordinal $\alpha$ such that $S^{(\alpha)}$ is empty, and minimal such $\alpha$ is called the \emph{height} of $S$ and is denoted by $ht(S)$. Thus if $K$ is a scattered compact space, then $ht(K)$ is a successor ordinal, $K^{(ht(K)-1)}$ is finite, and $K^{(\alpha)}$ is infinite for each ordinal $\alpha<ht(K)-1$. All topological spaces are assumed to be Hausdorff.

All Banach spaces are tacitly assumed to be real and of dimension at least $1$. If $E$ is a Banach space then $E^*$ stands for its dual space. We denote by $B_E$ and $S_E$ the unit ball and sphere in $E$, respectively, and we write $\la\cdot,\cdot\ra\colon E^*\times E\to\er$ for the duality mapping. All isomorphisms between Banach spaces are assumed to be surjective, otherwise they are referred to as into isomorphisms. The \emph{Banach-Mazur distance} of Banach spaces $E_1$, $E_2$ is defined to be the infimum of $\norm{T}\norm{T^{-1}}$ over the set of all isomorphisms $T:E_1 \rightarrow E_2$ and is denoted by $d_{BM}(E_1, E_2)$. 

Let $K$ be a locally compact space and $E$ be a Banach space. For $h \in \C_0(K, E)$ and $\e \in \E$, $\e(h)$ is the element of $\C_0(K)$ defined by $\e(h)(x)=\la \e, h(x) \ra$ for $x \in K$. Next, for a function $f \in \C_0(K)$ and $e \in E$, the function $f \otimes e \in \C_0(K, E)$ is defined by
\[ (f \otimes e)(x)=f(x)e, \quad x \in K .\]

If $\H$ is a closed subspace of $\C_0(K, E)$, then its canonical scalar function space $\A \subseteq \C_0(K)$ is defined as the closed linear span of the set 
\[ \lbrace \e(h): \e \in \E, h \in \H \rbrace  \subseteq \C_0(K).\]
Moreover, let $\A^E$ stands for the subspace of $\A$ consisting of all functions $h$ from $\A$ satisfying that $h \otimes e \in \H$ for each $e \in E$. The \emph{Choquet boundary} $\Ch_{\H} K$ of $\H$ is defined as the Choquet boundary of $\A$, that is, $\Ch_{\H} K$ is the set of those points $x \in K$ such that the functional 
\[i(x): h \mapsto h(x), \quad h \in \A\] 
is an extreme point of $B_{\A^{*}}$. It follows by \cite[Lemma 2.1]{rondos-spurny-vector-spaces} and \cite[Theorem 2.3.8]{fleming-jamison-1} that the Choquet boundary is a boundary for $\H$, that is,
\[\norm{h}=\sup_{x \in \Ch_{\H} K} \norm{h(x)}, \quad h \in \H.\]
Thus a canonical example of a closed boundary for $\H$ is the closure $\ov{\Ch_{\H} K}$ of the Choquet boundary of $\H$. Moreover, the following strict maximum principle holds. If $h$ is a function from $\A$ and $C \in \er$, then $h<C$ on $K$ provided $h<C$ on $\Ch_{\H} K$, see \cite[Proposition 3.87]{lmns}. Also, it is easy to see that the Choquet boundary $\Ch_{\H} K$ is a subset of $\Omega_{\H}$. Indeed, the space $\A$ is nontrivial by our assumption, and hence $0$ is not an extreme point of $\B_{\A^*}$.

In this paper, we use a slightly stronger definition of weak peak points for vector-valued subspaces than in \cite{rondos-spurny-vector-spaces}. Here we say that a point $x \in K$ is a \emph{weak peak point} (with respect to $\H$), if for each neighbourhood $U$ of $x$ and $\ep \in (0, 1)$ there exists a function $h \in \A^E$, such that 
\[0 \leq h \leq 1, h(x)>1-\ep \text{ and } h<\ep \text{ on } \Ch_{\H} K \setminus U.\] 
Thus if $\H=\C(K, E)$ then $\A^E=\A=\C(K)$, and each point of $K$ is a weak peak point. If $X$ is a compact convex set in a locally convex space and $\H=\fra(X, E)$, the space of affine $E$-valued continuous functions on $X$, then $\A^E=\A=\fra(X, \er)$, and a point $x \in X$ is a weak peak point with respect to $\H$ if and only if it is a weak peak point in the classical sense. We also note that the set $\W_{\H}$ of weak peak points of $\H$ is a subset of the Choquet boundary of $\H$, see \cite[Lemma 2.5]{rondos-spurny-vector-spaces}.

For the proof of the following Lemma, we recall that a series $\sum_{i=1}^{\infty} e_i$ in a Banach space $E$ is \emph{weakly unconditionally Cauchy} if $\sum_{i=1}^{\infty} \abs{\la \e, e_i \ra}< \infty$ for each $\e \in \E$.

\begin{lemma}
	\label{c_0}
	Let for $i=1, 2$, $K_i$ is a compact space, $E_i$ is a Banach space and $\H_i$ is a closed subspace of $\C(K_i, E_i)$. Let $\A_1$ be the scalar function space associated to $\H_1$, let $T:\H_1 \rightarrow \H_2$ be an into isomorphism, and fix a nonzero vector $e \in E_1$. Suppose that $\{ f_i \}$ is a sequence in $\A_1$ satisfying that $h_i=f_i \otimes e \in \H_1$ for each $i \in \en$. Suppose that there exists a constant $C \in \er$ satisfying that for each $n \in \en$ and $\alpha_1, \ldots , \alpha_n \in S_{\er}$ it holds that
	\[\norm{\sum_{i=1}^n \alpha_i h_i}<C.\]
	Moreover, let $y \in K_2$ and $\ep>0$ satisfy that 
	$\norm{Th_i(y)}>\ep$ for each $i \in \en$. Then $E_2$ contains an isomorphic copy of $c_0$.
\end{lemma}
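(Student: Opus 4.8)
The plan is to exploit the hypothesis $\sum \alpha_i h_i$ being uniformly bounded over sign sequences to show that the series $\sum h_i$ is weakly unconditionally Cauchy in $\H_1$, transport this to $\H_2$ via $T$, and then combine weak unconditional Cauchyness with the uniform lower bound $\norm{Th_i(y)}>\ep$ at the single point $y$ to extract a $c_0$-sequence inside $E_2$. First I would recall that a series $\sum_{i} g_i$ in a Banach space is weakly unconditionally Cauchy if and only if there is a constant $M$ with $\norm{\sum_{i \in F} \alpha_i g_i} \leq M$ for every finite $F$ and every choice of scalars $\alpha_i \in [-1,1]$; the given bound $\norm{\sum_{i=1}^n \alpha_i h_i} < C$ for $\alpha_i \in S_\er = \{-1,1\}$ yields exactly such an $M$ (one may take $M = C$ after the standard averaging/convexity argument reducing $[-1,1]$-coefficients to sign coefficients, or simply $M=2C$ without it). Hence $\sum_i h_i$ is weakly unconditionally Cauchy in $\H_1$, and since $T$ is a bounded linear operator, $\sum_i Th_i$ is weakly unconditionally Cauchy in $\H_2 \subseteq \C(K_2, E_2)$: for each functional $\Phi \in \H_2^*$ we have $\sum_i \abs{\la \Phi, Th_i\ra} = \sum_i \abs{\la T^*\Phi, h_i\ra} < \infty$.

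Next I would pass from $\H_2$ to the fibre at $y$. The evaluation map $\delta_y \colon \H_2 \to E_2$, $g \mapsto g(y)$, is bounded (norm $\leq 1$), so the series $\sum_i Th_i(y)$ is weakly unconditionally Cauchy in $E_2$: for $\e \in E_2^*$, the functional $g \mapsto \la \e, g(y)\ra$ lies in $\H_2^*$, whence $\sum_i \abs{\la \e, Th_i(y)\ra} < \infty$. Thus we have in $E_2$ a weakly unconditionally Cauchy series $\sum_i u_i$, where $u_i = Th_i(y)$, satisfying $\norm{u_i} > \ep$ for all $i$. By the Bessaga--Pe\l czy\'nski $c_0$-theorem, a weakly unconditionally Cauchy series whose terms do not converge to $0$ in norm has a subsequence equivalent to the unit vector basis of $c_0$; since $\norm{u_i} > \ep$ for every $i$, the terms are bounded away from $0$, so such a subsequence exists and $E_2$ contains an isomorphic copy of $c_0$. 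This is the desired conclusion.

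The main obstacle — really the only nontrivial point — is verifying that the scalar-coefficient bound over $S_\er$ genuinely delivers weak unconditional Cauchyness with a uniform constant; this is where one invokes the equivalence between the "$\sum \abs{\la \e, \cdot\ra} < \infty$" definition and the "bounded partial sums over bounded scalars" characterization. One must be slightly careful that the $f_i$ (equivalently $h_i = f_i \otimes e$) are only assumed to lie in $\A_1$ with $h_i \in \H_1$, but this is exactly what is needed for $Th_i$ to make sense and for the evaluation argument; the vector $e$ plays no further role beyond ensuring $h_i \in \H_1$. The rest is a routine chain of "bounded operator preserves WUC series" observations plus a citation of Bessaga--Pe\l czy\'nski; no delicate estimates are required.
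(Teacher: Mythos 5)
Your proposal is correct and follows essentially the same route as the paper: both arguments use the sign-choice trick to convert the uniform bound on $\norm{\sum_{i=1}^n \alpha_i h_i}$ into weak unconditional Cauchyness, push the series through $T$ and the evaluation at $y$ to get a weakly unconditionally Cauchy series $\sum_i Th_i(y)$ in $E_2$ with terms bounded away from $0$, and conclude via the Bessaga--Pe\l czy\'nski characterization of spaces containing $c_0$. The only cosmetic difference is that the paper applies the sign trick directly to the composite functional $T^*\phi(y,e^*) \in \H_1^*$ rather than first establishing weak unconditional Cauchyness in $\H_1$ and then invoking preservation under bounded operators.
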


\begin{proof}
	We consider the evaluation mapping $\phi\colon K_2 \times \E_2 \to \H_2^*$ defined as
	\[ \la \phi(y, \e), g \ra=\la \e, g(y) \ra, \quad g \in \H_2, y \in K_2, \e \in \E_2.\] 
	By the classical characterization of the Banach spaces containing $c_0$, see \cite[Theorem 6.7]{morrison2001functional}, it is enough to show that the series $\sum_{i=1}^{\infty} Th_i(y)$ is weakly unconditionally Cauchy in $E_2$. Thus fix $e^* \in S_{E_2^*}$, and let $T^*$ be the adjoint of $T$. 
	Fix $n \in \en$, and let $\alpha_1, \ldots, \alpha_n \in S_{\er}$ satisfy 
	\[\abs{\la T^*\phi(y, e^*), h_i \ra}=\alpha_i \la T^*\phi(y, e^*), h_i \ra, \quad i=1, \ldots, n.\]
	Then we have 
	\begin{equation}
	\nonumber
	\begin{aligned}
	&\sum_{i=1}^{n} \abs{\la e^*, Th_i(y) \ra}=\sum_{i=1}^{n} \abs{\la \phi(y, e^*), Th_i \ra}=
	\sum_{i=1}^{n} \abs{\la T^*\phi(y, e^*), h_i \ra}
	=\\&=
	\sum_{i=1}^{n} \alpha_i \la T^*\phi(y, e^*), h_i \ra=\la T^*\phi(y, e^*), \sum_{i=1}^{n} \alpha_i h_i \ra 
	\leq \\& \leq
	\norm{T^*\phi(y, e^*)} \norm{\sum_{i=1}^n \alpha_i h_i}\leq \norm{T^*} \norm{e^*}{\norm{\sum_{i=1}^n \alpha_i h_i}} < C \norm{T^*}. 
	\end{aligned}
	\end{equation}
	Thus also $\sum_{i=1}^{\infty} \abs{\la e^*, Th_i(y) \ra} \leq C\norm{T^*}<\infty$, which finishes the proof.
\end{proof}

We also prove the following two simple observations, as we will be using them repeatedly later on.

\begin{lemma}
\label{omezenost}
Let $K$ be a compact space, $E$ be a Banach space and $\H$ be a closed subspace of $\C(K, E)$. Let $n \in \en$, $C \in \er$, $\{\alpha_i\}_{i=1}^n$ be positive numbers, $\{U_i\}_{i=1}^n$ be pairwise disjoint nonempty open sets in $K$, and let functions $\{h_i\}_{i=1}^n \subseteq \H$ satisfy that for each $i=1 \ldots, n$, $\norm{h_i} \leq C$ on $K$ and $\norm{h_i}<\alpha_i$ on $\Ch_{\H} K \setminus U_i$. Then
\[\norm{\sum_{i=1}^n h_i} \leq C+\sum_{i=1}^n \alpha_i.\]
\end{lemma}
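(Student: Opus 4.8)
The plan is to estimate $\norm{\sum_{i=1}^n h_i(x)}$ pointwise on $\Ch_{\H} K$ and then invoke the fact that the Choquet boundary is a boundary for $\H$, so that the supremum of $x \mapsto \norm{\sum_{i=1}^n h_i(x)}$ over $\Ch_{\H} K$ already equals $\norm{\sum_{i=1}^n h_i}$. Fix a point $x \in \Ch_{\H} K$. Since the open sets $U_1, \dots, U_n$ are pairwise disjoint, the point $x$ lies in at most one of them; say $x \in U_{i_0}$ (the case when $x$ lies in none of them is handled identically, simply omitting the privileged index). Then for every index $i \neq i_0$ we have $x \in \Ch_{\H} K \setminus U_i$, hence $\norm{h_i(x)} < \alpha_i$, while for the single index $i_0$ we only use the global bound $\norm{h_{i_0}(x)} \leq \norm{h_{i_0}} \leq C$.

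Now apply the triangle inequality in $E$:
\[
\Norm{\sum_{i=1}^n h_i(x)} \leq \sum_{i=1}^n \norm{h_i(x)} = \norm{h_{i_0}(x)} + \sum_{i \neq i_0} \norm{h_i(x)} \leq C + \sum_{i \neq i_0} \alpha_i \leq C + \sum_{i=1}^n \alpha_i,
\]
where the last inequality uses that all the $\alpha_i$ are positive. Since this bound holds for every $x \in \Ch_{\H} K$, and since $\sum_{i=1}^n h_i \in \H$ (as $\H$ is a linear subspace), the boundary property of $\Ch_{\H} K$ recalled in Section~2 gives
\[
\Norm{\sum_{i=1}^n h_i} = \sup_{x \in \Ch_{\H} K} \Norm{\sum_{i=1}^n h_i(x)} \leq C + \sum_{i=1}^n \alpha_i,
\]
which is the claimed estimate.

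There is essentially no obstacle here; the only point requiring a moment's care is the bookkeeping around the privileged index $i_0$, i.e.\ making sure that the disjointness of the $U_i$ is genuinely used to guarantee that at most one summand fails to satisfy the small-norm bound $\norm{h_i(x)} < \alpha_i$ at any given boundary point. One should also note explicitly that the inequalities $\norm{h_i} < \alpha_i$ on $\Ch_{\H} K \setminus U_i$ are only assumed on the Choquet boundary (not on all of $K$), which is precisely why the argument is phrased in terms of $\Ch_{\H} K$ and then lifted to $K$ via the boundary property rather than worked out directly on $K$.
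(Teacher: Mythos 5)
Your proof is correct and follows essentially the same route as the paper's: a pointwise triangle-inequality estimate on $\Ch_{\H} K$, splitting according to whether the boundary point lies in some $U_{i_0}$ or in none of them, followed by the boundary property of $\Ch_{\H} K$ to pass from the Choquet boundary to all of $K$. No gaps.
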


\begin{proof}
If $x \in U_{i_0} \cap \Ch_{\H} K$ for some $i_0 \in \{1, \ldots, n\}$, then since the sets $U_i$ are pairwise disjoint, 
\[\norm{\sum_{i=1}^n h_i(x)} \leq \sum_{i=1}^n \norm{h_i(x)} \leq C+\sum_{i \neq \i_0} \alpha_i.\]
If $x \in \Ch_{\H} K \setminus \bigcup_{i=1}^n U_i$, then 
\[\norm{\sum_{i=1}^n h_i(x)} \leq \sum_{i=1}^n \norm{h_i(x)} \leq \sum_{i=1}^n \alpha_i.\]
Thus $\norm{\sum_{i=1}^n h_i(x)} \leq C+\sum_{i=1}^n \alpha_i$ for $x \in \Ch_{\H} K$, and hence $\norm{\sum_{i=1}^n h_i(x)} \leq C+\sum_{i=1}^n \alpha_i$ for $x \in K$.
\end{proof}

\begin{lemma}
	\label{nerovnost}
	Let $K$ be a compact space, $\A \subseteq \C(K)$ be a closed subspace, $0<\ep<1$, and functions $h, g \in \A$ satisfy that $0 \leq g \leq 1, 0 \leq h \leq 1$ on $K$, $h>1-\ep$ on an open set $U$ and $g<\ep$ on $\Ch_{\A} K \setminus U$. Then $g<h+\ep$ on $K$.
\end{lemma}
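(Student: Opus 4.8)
Lemma~\ref{nerovnost} is a pointwise statement, so the plan is simply to distinguish cases according to whether a given $x \in \Ch_{\A} K$ lies in $U$ or not, obtain the inequality $g(x) < h(x) + \ep$ on all of $\Ch_{\A} K$, and then invoke the strict maximum principle recorded earlier in this section (see \cite[Proposition 3.87]{lmns}, applied to the function $g - h \in \A$ and the constant $\ep$) to push the inequality from $\Ch_{\A} K$ to all of $K$.

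In more detail: fix $x \in \Ch_{\A} K$. If $x \in U$, then $h(x) > 1 - \ep$, and since $g \leq 1$ on $K$ we get $g(x) \leq 1 < (1-\ep) + \ep < h(x) + \ep$. If on the other hand $x \in \Ch_{\A} K \setminus U$, then by hypothesis $g(x) < \ep \leq h(x) + \ep$, using $h \geq 0$. In either case $g(x) < h(x) + \ep$, i.e. $(g - h)(x) < \ep$ for every $x \in \Ch_{\A} K$. Now $g - h \in \A$, so the strict maximum principle gives $g - h < \ep$ on all of $K$, which is exactly $g < h + \ep$ on $K$.

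There is essentially no obstacle here; the only point requiring a little care is that one must apply the \emph{strict} maximum principle (strict inequality on the Choquet boundary yields strict inequality on $K$), rather than merely the fact that $\Ch_{\A} K$ is a boundary, since the conclusion is a strict inequality and $\Ch_{\A} K$ need not be closed. This is precisely the content of \cite[Proposition 3.87]{lmns} as quoted above, so the argument is complete.
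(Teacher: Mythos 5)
Your proof is correct and follows essentially the same route as the paper: the same two-case analysis on $\Ch_{\A} K \cap U$ versus $\Ch_{\A} K \setminus U$, followed by the strict maximum principle to extend the strict inequality $g-h<\ep$ from the Choquet boundary to all of $K$. Your explicit remark that the \emph{strict} maximum principle is needed (since $\Ch_{\A} K$ need not be closed) is a point the paper leaves implicit, but the argument is the same.
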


\begin{proof}
	If $x \in \Ch_{\A} K \cap U$, then \[g(x) \leq 1=1-\ep+\ep<h(x)+\ep.\] On the other hand, if $x \in \Ch_{\A} K \setminus U$, then	
	\[g(x)<\ep \leq h(x)+\ep.\]
	Thus $g-h<\ep$ on $\Ch_{\A} K$, and hence $g-h<\ep$ on $K$.
\end{proof}

\section{Isomorphisms with bound 3}

Throughout this section, let for $i=1, 2$, $K_i$ be a compact space, $E_i$ be a Banach space and $\H_i$ be a closed subspace of $\C(K_i, E_i)$. Let $\A_1$ be the scalar function space associated to $\H_1$ and $\B$ be a closed boundary for $\H_2$.
For $g \in \H_2$ and $\ep>0$ we denote 
\[\R(g, \ep)=\{ y \in K_2 \colon \norm{g(y)} \geq \ep\}.\]
Notice that from the definition it is clear that for each function $g \in \H_2$, $\R(g, \ep) \subseteq \Omega_{\H_2}$.
Next, inspired by \cite{CandidoGalego3}, we define certain sets $\F$, $\Lambda$, which we use for the proof of Theorem \ref{jednostranna}. For $0 < \ep <1$, functions $f_1, \ldots, f_n \in \A_1^{E_1}$ and points $z_1, \ldots, z_n \in \Ch_{\H_1} K_1$ we denote 
\begin{equation}
\nonumber
\begin{aligned}
\mathcal{F}((f_i, z_i)_{i=1}^n, \ep)
=&\{(g_1, \ldots, g_n) \in (\A_1^{E_1})^n \forall i=1, \ldots, n : 0 \leq g_i \leq 1, \\& g_i(z_i)>1-\ep \text{ and } g_i <f_i+\ep\}.
\end{aligned}
\end{equation}

Suppose moreover that $T:\H_1 \rightarrow \H_2$ is an into isomorphim, $e_1, \ldots, e_n \in E_1$ are nonzero vectors with $\max_{i=1, \ldots , n} \norm{e_i}=1$, $U_1, \ldots, U_n$ are pairwise disjoint nonempty open sets in $K_1$, and let $h_1, \ldots, h_n$ be functions in $\A_1^{E_1}$ satisfying that 
\[0 \leq h_i \leq 1, \norm{h_i}>1-\ep \text{ and } h_i<\ep \text{ on } \Ch_{\H_1} K_1 \setminus U_i, \quad i=1, \ldots, n.\]
Then for points $x_1, \ldots, x_n \in \Ch_{\H_1} K_1$ satisfying that $h_i(x_i)>1-\ep$ for each $i=1, \ldots, n$, we denote
\begin{equation}
\nonumber
\begin{aligned}
\Lambda_{(T, (h_i, e_i)_{i=1}^n, \ep)}((x_i)_{i=1}^n)
=\B \cap \bigcap_{(g_1, \ldots, g_n) \in \mathcal{F}((h_i, x_i)_{i=1}^n, \ep)} \R(T(\sum_{i=1}^n g_i \otimes e_i), \ep).
\end{aligned}
\end{equation}

Notice that the set $\mathcal{F}((h_i, x_i)_{i=1}^n, \ep)$ is nonempty, as it contains $(h_1, \ldots, h_n)$. The following technical Lemma, describing the properties of the above defined set $\Lambda$, contains the main part of the proof of Theorem \ref{jednostranna}.

\begin{lemma}
	\label{main}
	Let $E_2$ do not contain an isomorphic copy of $c_0$. Let $n \in \en$, and suppose that there exists an into  isomorphism $T: \H_1 \rightarrow \H_2$ with $\norm{T}<3$ and $\norm{T^{-1}}=1$. Fix	
	 \[0<\ep<\frac{3-\norm{T}}{\norm{T}(7n+2)+3n+2}.\]
	 Moreover, let $e_1, \ldots, e_n \in E_1$ be nonzero vectors with $\max_{i=1, \ldots , n} \norm{e_i}=1$, $U_1, \ldots, U_n$ be pairwise disjoint nonempty open subsets of $K_1$, and let $h_1, \ldots, h_n$ be functions in $\A_1^{E_1}$ satisfying that 
	 \[0 \leq h_i \leq 1, \norm{h_i}>1-\ep \text{ and } h_i<\ep \text{ on } \Ch_{\H_1} K_1 \setminus U_i, \quad i=1, \ldots, n.\]
	
	Then:
	\begin{itemize}
		\item [a)] If for $i=1, \ldots, n$, $\{x_i^k\}_{k \in \en} \subseteq \W_{\H_1}$ are infinite sequences of pairwise distinct points satisfying that 
		\[h_i(x_i^k)>1-\ep, \quad i=1, \ldots, n, k \in \en,\]
		 then the intersection $\bigcap_{k \in \en} \Lambda_{(T, (h_i, e_i)_{i=1}^n, \ep)} ((x_i^k)_{i=1}^n)$ is empty.
		
		\item[b)] If $x_1, \ldots, x_n \in \W_{\H_1}^{(\alpha)}$ for an ordinal $\alpha$ are points satisfying that
		\[h_i(x_i)>1-\ep, \quad i=1, \ldots, n,\]
		then $\Lambda_{(T, (h_i, e_i)_{i=1}^n, \ep)}((x_i)_{i=1}^n) \cap \B^{(\alpha)} \neq \emptyset$.
	\end{itemize}
\end{lemma}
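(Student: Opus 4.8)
The plan is to prove both parts by contradiction, using the hypothesis that $E_2$ contains no copy of $c_0$ (via Lemma~\ref{c_0}) to rule out configurations that would otherwise be consistent. Throughout, the normalization $\norm{T^{-1}}=1$ means $\norm{g}\leq\norm{Tg}$ for $g\in\H_1$, and $\norm{T}<3$; the precise smallness of $\ep$ is chosen so that the bookkeeping below leaves a definite positive gap. The key construction is this: given the open sets $U_i$ and points $x_i$ (or $x_i^k$) with $h_i(x_i)>1-\ep$, the weak-peak-point property lets us, for \emph{any} prescribed neighbourhoods $V_i\ni x_i$ with $\ov{V_i}\subseteq U_i$, pick $g_i\in\A_1^{E_1}$ with $0\leq g_i\leq 1$, $g_i(x_i)>1-\ep$, $g_i<\ep$ on $\Ch_{\H_1}K_1\setminus V_i$, and (using Lemma~\ref{nerovnost} with $h_i$ on $U_i\supseteq V_i$) $g_i<h_i+\ep$; so $(g_1,\dots,g_n)\in\mathcal F((h_i,x_i)_{i=1}^n,\ep)$. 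The function $\sum_i g_i\otimes e_i$ then has norm close to $1$ (Lemma~\ref{omezenost}, since the $V_i$ are disjoint and $\max\norm{e_i}=1$), hence $\norm{T(\sum_i g_i\otimes e_i)}<3$ roughly; and at any point $y\in\Lambda_{(T,(h_i,e_i),\ep)}((x_i))$ we get $\norm{T(\sum_i g_i\otimes e_i)(y)}\geq\ep$ by definition of $\R$.

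For part~a): suppose $y$ lies in $\bigcap_{k}\Lambda_{(T,(h_i,e_i)_{i=1}^n,\ep)}((x_i^k)_{i=1}^n)$. The plan is to manufacture, for each fixed $i$, infinitely many functions $f_{i}^{k}\in\A_1^{E_1}$ that are "bumps" localized near the distinct points $x_i^k$ — shrink the $V_i^k$ so that for each $i$ the supports (where $f_i^k>\ep$) are pairwise disjoint as $k$ varies, which is possible since the $x_i^k$ are pairwise distinct and $K_1$ is Hausdorff. Passing to a subsequence if necessary, arrange this simultaneously for all $i$. Now set $h^k=\sum_{i=1}^n f_i^k\otimes e_i\in\H_1$. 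Lemma~\ref{omezenost} gives a uniform bound $\norm{\sum_{k=1}^m\alpha_k h^k}<C$ for $\alpha_k\in S_{\er}$ and all $m$ (the disjointness across $k$ is what makes the bound independent of $m$; across $i$ we just add $n$ terms each $\leq 1$). On the other hand, each $(f_1^k,\dots,f_n^k)\in\mathcal F((h_i,x_i^k)_{i=1}^n,\ep)$, so $y\in\Lambda_{\cdots}((x_i^k))$ forces $\norm{Th^k(y)}=\norm{T(\sum_i f_i^k\otimes e_i)(y)}\geq\ep$ for every $k$. By Lemma~\ref{c_0} (applied with the single vector, say, $e_1$ — here one should in fact run Lemma~\ref{c_0} in a slightly generalized form allowing finitely many vectors $e_1,\dots,e_n$, or else absorb the $e_i$'s into the $f_i^k$'s by a trivial modification of its proof) $E_2$ contains a copy of $c_0$, contradiction. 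Hence the intersection is empty.

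For part~b): induct on $\alpha$. The case $\alpha=0$ is the observation that $\mathcal F((h_i,x_i)_{i=1}^n,\ep)$ contains $(h_1,\dots,h_n)$, that $\norm{\sum_i h_i\otimes e_i}$ is close to $1$ and hence (using $\norm{T^{-1}}=1$ and the lower bound from $h_i(x_i)>1-\ep$, $\max\norm{e_i}=1$) that $\norm{T(\sum_i h_i\otimes e_i)}$ is bounded below by something exceeding $\ep$; since $\B$ is a \emph{closed} boundary for $\H_2$, the norm is attained on $\B$, so $\R(T(\sum_i h_i\otimes e_i),\ep)\cap\B\neq\emptyset$, and a compactness/finite-intersection argument over the (directed, by pointwise minima) family $\mathcal F$ shows $\Lambda_{\cdots}((x_i))\cap\B\neq\emptyset$. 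For the successor step $\alpha=\gamma+1$: each $x_i\in\W_{\H_1}^{(\gamma+1)}$ is a limit of points in $\W_{\H_1}^{(\gamma)}$, so choose sequences $x_i^k\to x_i$ in $\W_{\H_1}^{(\gamma)}$ with the $x_i^k$ pairwise distinct and, shrinking $U_i$ around $x_i$ and using continuity of $h_i$, still satisfying $h_i(x_i^k)>1-\ep$; apply the inductive hypothesis to get $y_k\in\Lambda_{\cdots}((x_i^k))\cap\B^{(\gamma)}$, take a cluster point $y$ of $\{y_k\}$ in the compact set $\B^{(\gamma)}$, note $y\in\B^{(\gamma+1)}$ since it is a limit of distinct points $y_k$ (distinctness coming from part~a), and finally check $y\in\Lambda_{\cdots}((x_i))$ by a limiting argument: for $(g_1,\dots,g_n)\in\mathcal F((h_i,x_i)_{i=1}^n,\ep)$ one slightly perturbs to elements of $\mathcal F((h_i,x_i^k)_{i=1}^n,\ep')$ for $k$ large, uses $\norm{T(\sum g_i\otimes e_i)(y_k)}\geq\ep'$, and passes to the limit. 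The limit step for $\alpha$ a limit ordinal is a routine intersection/compactness argument, using that $\B^{(\alpha)}=\bigcap_{\gamma<\alpha}\B^{(\gamma)}$ is a decreasing intersection of compacta.

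**Main obstacle.** The delicate point is the \emph{continuity of membership in $\Lambda$ under the limiting procedures} in part~b): the set $\mathcal F((h_i,x_i)_{i=1}^n,\ep)$ depends on the base points $x_i$, so in the successor and limit steps one must carefully interpolate between $\mathcal F$ at the $x_i^k$ and $\mathcal F$ at $x_i$, and track how the strict inequality $\norm{g}\geq\ep$ defining $\R(\cdot,\ep)$ degrades. This is exactly why the hypothesis on $\ep$ carries the large denominator $\norm{T}(7n+2)+3n+2$: the slack must survive finitely many $\ep$-perturbations (from Lemma~\ref{nerovnost}, from Lemma~\ref{omezenost} with $n$ bumps, and from the bound $\norm{T}<3$) with a strictly positive margin. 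Getting these constants to close is the real work; everything else is the $c_0$-dichotomy of Lemma~\ref{c_0} plus standard scattered-space/compactness manipulations.
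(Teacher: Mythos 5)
Your outline of part a), the successor step, and the limit step matches the paper's argument in spirit (the paper uses nets rather than sequences in the successor step, since $K_1$ need not be first countable, and in part a) it avoids your "generalized Lemma \ref{c_0}" by a pigeonhole: from $\norm{T(\sum_i g_i^k\otimes e_i)(y)}\geq\ep$ it extracts a single index $i_0$ with $\norm{T(g_{i_0}^k\otimes e_{i_0})(y)}\geq\ep/n$ for infinitely many $k$, and applies Lemma \ref{c_0} as stated to that one sequence). These are repairable cosmetic differences.

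The genuine gap is in the base case $\alpha=0$ of part b). What you actually establish there is only that each \emph{individual} set $\R(T(\sum_i g_i\otimes e_i),\ep)\cap\B$ is nonempty; but $\Lambda_{(T,(h_i,e_i)_{i=1}^n,\ep)}((x_i)_{i=1}^n)$ is the intersection over \emph{all} tuples in $\F((h_i,x_i)_{i=1}^n,\ep)$, and compactness only helps once you have the finite intersection property: a \emph{single} $y\in\B$ at which $\norm{T(\sum_i g_i^k\otimes e_i)(y)}\geq\ep$ simultaneously for a given finite family $(g_i^k)$, $k=1,\dots,p$. Your proposed mechanism --- that $\F$ is "directed by pointwise minima" --- does not deliver this: $\A_1^{E_1}$ is a linear subspace, not a sublattice, so pointwise minima need not lie in it; and even if $g=\min(g^1,g^2)$ did, $T$ is merely an isomorphism, so $\norm{T(g\otimes e)(y)}\geq\ep$ gives no lower bound on $\norm{T(g^1\otimes e)(y)}$ or $\norm{T(g^2\otimes e)(y)}$ at that same $y$. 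The paper's actual device --- and the reason the threshold is $3$ and $\ep$ has that particular denominator --- is to build auxiliary peak functions $f_i$ concentrated where all the $g_i^k$ are simultaneously large, form $\sum_i(2f_i+h_i)\otimes e_i$, whose norm is $>3-3n\ep$ and is attained (via $\norm{T^{-1}}=1$ and the closed boundary $\B$) at some $y\in\B$, and then observe that $2f_i+h_i-2g_i^k$ has norm only about $1$; so if $\norm{T(\sum_i g_i^k\otimes e_i)(y)}<\ep$ one would force $\norm{T}\cdot(1+O(n\ep))\geq 3-O(n\ep)$, contradicting $\norm{T}<3$. This "$2f+h$ versus $2f+h-2g$" comparison is the crux of the whole lemma; your proposal does not contain it, and identifying the "main obstacle" as the limiting procedures misplaces where the real work lies.
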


\begin{proof}
	For the proof of a), suppose that for $i=1, \ldots, n$ there exit such sequences $\{x_i^k\}_{k \in \en}$ and $y \in \bigcap_{k \in \en} \Lambda_{(T, (h_i, e_i)_{i=1}^n, \ep)} ((x_i^k)_{i=1}^n)$. Passing to subsequences and using the continuity of the functions $h_i$, we may assume that there exist pairwise disjoint open sets $\{V_i^k\}_{k \in \en}, i=1, \ldots, n$, each containing $x_i^k$, and such that $h_i>1-\ep$ on $V_i^k$. For each $i$ and $k$, since $x_i^k$ is a weak peak point, we may find a function $g_i^k \in \A_1^{E_1}$ satisfying that
	\[0 \leq g_i^k \leq 1, g_i^k(x_i^k)>1-\frac{\ep}{2^k} \text{ and } g_i^k<\frac{\ep}{2^k} \text{ on } \Ch_{\H_1} K_1 \setminus V_i^k.\]
	Notice that for each $i=1, \ldots, n$, since the sets $\{V_i^k\}_{k \in \en}$ are pairwise disjoint, for each $m \in \en$ and $\alpha_1, \ldots , \alpha_m \in S_{\er}$ it holds that $\norm{\sum_{k=1}^m \alpha_k g_i^k}\leq 1+\ep$
	by Lemma \ref{omezenost}. Also, for each $i, k$, we have $g_i^k<h_i+\ep$ (see Lemma \ref{nerovnost}), thus for each $k \in \en$ it holds that $(g_1^k, \ldots, g_n^k) \in \F((h_i, x_i^k)_{i=1}^n, \ep)$. Hence $\norm{T(\sum_{i=1}^n (g_i^k \otimes e_i))(y)} \geq \ep$ by the definition of $\Lambda_{(T, (h_i, e_i)_{i=1}^n, \ep)}((x_i^k)_{i=1}^n)$.
	
	Consequently, for each $k \in \en$ there exists $i \in \{1, \ldots, n\}$ such that 
	\[\norm{T(g_i^k \otimes e_i)(y)}\geq \frac{\ep}{n}.\] Thus there exists $i_0 \in \{1, \ldots, n\}$ such that the set
	\[N=\{ k \in \en: \norm{T(g_{i_0}^k \otimes e_{i_0})(y)}\geq \frac{\ep}{n}\}\] is infinite. But since $E_2$ does not contain a copy of $c_0$, the sequence $\{g_{i_0}^k\}_{k \in N}$ contradicts Lemma \ref{c_0}, which finishes the proof of a).
	
	For the proof of b) we proceed by transfinite induction. For $\alpha=0$ we want to prove that $\Lambda_{(T, (h_i, e_i)_{i=1}^n, \ep)}((x_i)_{i=1}^n)$ is a nonempty set for each points $x_1, \ldots, x_n \in \W_{\H_1}$ satisfying $h_i(x_i)>1-\ep$ for each $i=1, \ldots, n$. Notice that the set  \\$\Lambda_{(T, (h_i, e_i)_{i=1}^n, \ep)}((x_i)_{i=1}^n)$ is compact, as it is an intersection of closed subsets of $\B$. Thus it is enough to show that the collection of sets
	\[\{y \in \B: \norm{T(\sum_{i=1}^n g_i \otimes e_i)(y)} \geq \ep \}, \quad (g_i)_{i=1}^n \in \mathcal{F}((h_i, x_i)_{i=1}^n, \ep) \] 
	has the finite intersection property. So, let functions $g_i^k \in \A_1^{E_1}$, $i=1, \ldots, n$, $k=1, \ldots, p$ satisfy
	\[0 \leq g_i^k \leq 1, g_i^k(x_i)>1-\ep \text{ and } g_i^k <h_i+\ep.\]
	By the continuity of the functions $g_i^k$, for each $i=1, \ldots, n$ there exists an open set $V_i \subseteq U_i$ containing $x_i$, and such that $g_i^k>1-\ep$ on $V_i$ for each $k=1, \ldots, p$. Since $x_i$ is a weak peak point, there exists a function $f_i \in \A_1^{E_1}$ satisfying that 
	\[0 \leq f_i \leq 1, f_i(x_i)>1-\ep \text{ and } f_i <\ep \text{ on } \Ch_{\H_1} K_1 \setminus V_i.\]
	Then $f_i<g_i^k+\ep$ for each $i$ and $k$ by Lemma \ref{nerovnost}. 
	
	Next, fix $i_0 \in \{1, \ldots, n\}$ satisfying that $\norm{e_{i_0}}=1$. We have 
	\[\norm{((2f_{i_0}+h_{i_0})\otimes e_{i_0})(x_{i_0})} =\abs{2f_{i_0}(x_{i_0})+h_{i_0}(x_{i_0})}\norm{e_{i_0}}>3-3\ep,\]
	and for $i \neq i_0$, 
	\[\norm{((2f_{i}+h_{i})\otimes e_{i})(x_{i_0})} = \abs{2f_{i}(x_{i_0})+h_{i}(x_{i_0})}\norm{e_i}<3\ep.\]
	Consequently, and since $\norm{T^{-1}}=1$, we have 
	\begin{equation}
	\nonumber
	\begin{aligned}
	&\norm{T(\sum_{i=1}^n (2f_{i}+h_{i})\otimes e_{i})} \geq \norm{\sum_{i=1}^n (2f_{i}+h_{i})\otimes e_{i}}
	\geq \norm{(\sum_{i=1}^n (2f_{i}+h_{i})\otimes e_{i})(x_{i_0})}
	\geq \\& \geq
	\norm{((2f_{i_0}+h_{i_0})\otimes e_{i_0})(x_{i_0})}-\sum_{i \neq i_0} \norm{((2f_{i}+h_{i})\otimes e_{i})(x_{i_0})} > 3-3\ep-(n-1)3\ep.
	\end{aligned}
	\end{equation}
	Hence there exists a point $y \in \B$ satisfying that
	\[\norm{T(\sum_{i=1}^n (2f_{i}+h_{i})\otimes e_{i})(y)}>3-3n\ep.\]
	Now, to finish the proof for $\alpha=0$ we need to show that for each $k=1, \ldots, p$,  $\norm{T(\sum_{i=1}^n g_i^k \otimes e_i)(y)} \geq \ep$. Thus fix $k \in \{1, \ldots, p\}$. Then for each $i=1, \ldots, n$, since $g_i^k<h_i+\ep$ and $f_i<g_i^k+\ep$, we have 
	\begin{equation}
	\nonumber
	\begin{aligned}
	-1-\ep \leq -g_i^k-\ep \leq 2f_i-g_i^k-\ep \leq 2f_i+h_i-2g_i^k \leq h_i+2\ep \leq 1+2\ep.
	\end{aligned}
	\end{equation} 
	Thus $\norm{(2f_i+h_i-2g_i^k) \otimes e_i} \leq 1+2\ep$.
	
	Moreover, for each $i=1, \ldots, n$ we have $g_i^k<h_i+\ep<2\ep$ on $\Ch_{\H_1} K_1 \setminus U_i$, and $f_i<\ep$ on $\Ch_{\H_1} K_1 \setminus V_i \supseteq \Ch_{\H_1} K_1 \setminus U_i$.
	Consequently, since the sets $U_i$ are pairwise disjoint, and for each $i=1, \ldots, n$ we have
	$\abs{2f_i+h_i-2g_i^k}<7\ep$ on $\Ch_{\H_1} K_1 \setminus U_i$, by Lemma \ref{omezenost} it follows that 
	\begin{equation}
	\nonumber
	\begin{aligned}
	&\norm{\sum_{i=1}^n (2f_{i}+h_{i}-2g_i^k)\otimes e_{i}} \leq  1+2\ep+7n\ep.
	\end{aligned}
	\end{equation}
	It follows that if $\norm{T(\sum_{i=1}^n g_i^k \otimes e_i)(y)}< \ep$, we would have
	\begin{equation}
	\nonumber
	\begin{aligned}
	&\norm{T}(1+2\ep+7n\ep) \geq \norm{T(\sum_{i=1}^n(2f_{i}+h_{i}-2g_i^k)\otimes e_{i})} 
	\geq \\& \geq
	\norm{T(\sum_{i=1}^n (2f_{i}+h_{i}-2g_i^k)\otimes e_{i})(y)} 
	\geq \\& \geq
	\norm{T(\sum_{i=1}^n (2f_{i}+h_{i})\otimes e_{i})(y)}-2\norm{T(\sum_{i=1}^n g_i^k\otimes e_{i})(y)} \geq 3-3n\ep-2\ep.
	\end{aligned}
	\end{equation}
	This contradicts the choice of $\ep$ and finishes the proof for $\alpha=0$.
	
	Now we assume that the statement b) holds for an ordinal $\alpha \geq 0$ and let $x_1, \ldots, x_n \in \W_{\H_1}^{(\alpha+1)}$ be points satisfying that $h_i(x_i)>1-\ep$ for $i=1, \ldots, n$. Then for $i=1, \ldots, n$, there exists a net $\{ x^i_{\lambda_i} \}_{\lambda_i \in \Lambda_i}$ of points in $\W_{\H_1}^{(\alpha)} \cap U_i$, distinct from $x_i$, converging to $x_i$, and satisfying that $h_i(x^i_{\lambda_i})>1-\ep$ for each $\lambda_i$. By the assumption, for each $(\lambda_1, \ldots, \lambda_n) \in \Lambda_1 \times \ldots \times \Lambda_n$ there exists a point 
	\[y_{(\lambda_i)_{i=1}^n} \in \Lambda_{(T, (h_i, e_i)_{i=1}^n, \ep)} ((x^i_{\lambda_i})_{i=1}^n) \cap \B^{(\alpha)}.\] Then we may consider $\{ y_{(\lambda_i)_{i=1}^n} \}$ as a net in $\B^{(\alpha)}$ with the ordering given by $(\lambda_i)_{i=1}^n\leq (\tilde{\lambda}_i)_{i=1}^n$ if $\lambda_i \leq \tilde{\lambda}_i$ for each $i=1, \ldots, n$. Since $\B^{(\alpha)}$ is compact, passing to a subnet we may assume that the net $\{ y_{(\lambda_i)_{i=1}^n} \}$ converges to a point $y \in \B^{(\alpha)}$. We want to show that $y \in \B^{(\alpha+1)}$. Assuming the contrary, the net $\{ y_{(\lambda_i)_{i=1}^n} \}$ is essentially constant, thus there exist $\lambda_1, \ldots, \lambda_n$ such that $y=y_{(\tilde{\lambda}_i)_{i=1}^n}$ for each $(\tilde{\lambda}_i)_{i=1}^n \geq (\lambda_i)_{i=1}^n$. But then for each $(\tilde{\lambda}_i)_{i=1}^n \geq (\lambda_i)_{i=1}^n$ we have 
	\[y=y_{(\tilde{\lambda}_i)_{i=1}^n} \in \Lambda_{(T, (h_i, e_i)_{i=1}^n, \ep)} ((x^i_{\tilde{\lambda}_i})_{i=1}^n),\]
	which contradicts a). Thus $y \in \B^{(\alpha+1)}$. 
	
	It remains to show that $y \in \Lambda_{(T, (h_i, e_i)_{i=1}^n, \ep)} ((x_i)_{i=1}^n)$. 
	Thus we choose arbitrary $(g_1, \ldots, g_n) \in \mathcal{F}((h_i, x_i)_{i=1}^n, \ep)$. Then for each $i=1, \ldots, n$, $g_i(x_i)>1-\ep$ by the definition, thus there exists $\lambda_i$ such that $g_i(x^i_{\tilde{\lambda}_i})>1-\ep$ for each $\tilde{\lambda}_i>\lambda_i$. This means that
	\[(g_i)_{i=1}^n \in \mathcal{F}((h_i, x^i_{\tilde{\lambda}_i})_{i=1}^n, \ep)\] for $(\tilde{\lambda}_i)_{i=1}^n \geq (\lambda_i)_{i=1}^n$. Thus for $(\tilde{\lambda}_i)_{i=1}^n \geq (\lambda_i)_{i=1}^n$ we have 
	\[\norm{T(\sum_{i=1}^n g_i \otimes e_i)(y_{(\tilde{\lambda}_i)_{i=1}^n})} \geq \ep.\] Consequently, $\norm{T(\sum_{i=1}^n g_i \otimes e_i)(y)} \geq \ep$ by continuity. This means that $y \in \Lambda_{(T, (h_i, e_i)_{i=1}^n, \ep)} ((x_i)_{i=1}^n)$.
	
	Finally, we assume that $\alpha$ is a limit ordinal and that b) holds for each ordinal $\beta<\alpha$. If $x_1, \ldots, x_n \in \W_{\H_1}^{(\alpha)}$, $h_i(x_i)>1-\ep$ for each $i$, then $x_1, \ldots, x_n \in \W_{\H_1}^{(\beta)}$ for each $\beta<\alpha$. Hence $\Lambda_{(T, (h_i, e_i)_{i=1}^n, \ep)} ((x_i)_{i=1}^n) \cap \B^{(\beta)} \neq \emptyset$ for each such $\beta$ by the assumption. Thus 
	\[\Lambda_{(T, (h_i, e_i)_{i=1}^n, \ep)} ((x_i)_{i=1}^n) \cap \B^{(\alpha)}=\bigcap_{\beta<\alpha} \Lambda_{(T, (h_i, e_i)_{i=1}^n, \ep)} ((x_i)_{i=1}^n) \cap \B^{(\beta)}\]
	is nonempty, as it is a nonincreasing intersection of compact sets. The proof is finished.
\end{proof}

Now we are ready to prove Theorem \ref{jednostranna}.

\emph{Proof of Theorem \ref{jednostranna}.}

\begin{proof}
	Without loss of generality we may assume that $\norm{T}<3$ and $\norm{T^{-1}}=1$, otherwise we would multiply $T$ by a suitable constant. Let $\A_1$ be the scalar function space associated to $\H_1$.
	
	a) Pick $x \in \W_{\H_1}^{(\alpha)}$ and 
	\[ 0<\ep<\frac{3-\norm{T}}{9\norm{T}+5},\]
	and let $e \in E_1$ be an arbitrary vector of norm $1$.  
	Since $x$ is a weak peak point, there exists a function $h \in \A_1^{E_1}$ satisfying that $0 \leq h \leq 1$ and $h(x)>1-\ep$. Then by Lemma \ref{main} b), the set 
	\[\Omega_{\H_2} \cap \B^{(\alpha)} \supseteq \Lambda_{(T, (h, e), \ep)}(x) \cap \B^{(\alpha)}\]
	is nonempty.
	
	b) Let $\alpha$ be an ordinal such that $\B^{(\alpha)} \cap \Omega_{\H_2}$ is finite. We assume that $\W_{\H_1}^{(\alpha)}$ is infinite and seek a contradiction. Then there exist pairwise disjoint open sets $\{U_m \}_{m \in \en}$ in $K_1$ and points $\{ x_m\}_{m \in \en} \subseteq \W_{\H_1}^{(\alpha)}$ such that $x_m \in U_m$ for each $m \in \en$. Fix $e \in S_{E_1}$, \[ 0<\ep<\frac{3-\norm{T}}{9\norm{T}+5},\]
	and find functions $\{g_m\}_{m \in \en}$ from $\A_1^{E_1}$ satisfying that for each $m \in \en$,
	\[0 \leq g_m \leq 1, g_m(x_m)>1-\frac{\ep}{2^m} \text{ and } g_m <\frac{\ep}{2^m} \text{ on } \Ch_{\H_1} K_1 \setminus U_m.\] 
	Then by Lemmas \ref{c_0} and \ref{omezenost}, for each $y \in K_2$ there are at most finitely many functions $g_m$ such that $\norm{T(g_m \otimes e)(y)} \geq \ep$. Thus there exists an index $m_0 \in \en$ satisfying that $\norm{T(g_{m_0} \otimes e)(y)}<\ep$ for each $y$ from the finite set $\B^{(\alpha)} \cap \Omega_{\H_2}$. This means that 
	\[\Lambda_{(T, (g_{m_0}, e), \ep)}(x_{m_0}) \cap \B^{(\alpha)} \cap \Omega_{\H_2}=\Lambda_{(T, (g_{m_0}, e), \ep)}(x_{m_0}) \cap \B^{(\alpha)}\]	is empty. But this contradicts Lemma \ref{main} b), which shows that $\W_{\H_1}^{(\alpha)}$ is finite.
	
	c) If $\alpha$ is an ordinal such that $\B^{(\alpha)} \cap \Omega_{\H_2}$ is finite, then we know by b) that $\W_{\H_1}^{(\alpha)}$ is also finite. Thus suppose that $\W_{\H_1}^{(\alpha)}=\{ x_1, \ldots, x_n\}$ and $\B^{(\alpha)} \cap \Omega_{\H_2}=\{y_1, \ldots, y_m\}$. We find pairwise disjoint open sets $U_1, \ldots, U_n$ such that $x_i \in U_i$ for each $i=1, \ldots, n$. 
	
	Fix 
	\[ 0<\ep<\frac{3-\norm{T}}{\norm{T}(7n+2)+3n+2},\]
	and let $h_1, \ldots, h_n$ be functions in $\A_1^{E_1}$ satisfying that 
	\[0 \leq h_i \leq 1, h_i(x_i)>1-\ep \text{ and } h_i<\ep \text{ on } \Ch_{\H_1} K_1 \setminus U_i.\]
	
	We define an operator $S:(E_1^n, \norm{\cdot}_{\max}) \rightarrow \C(\B^{(\alpha)} \cap \Omega_{\H_2}, E_2) \simeq (E_2^m, \norm{\cdot}_{\max})$ as
	\[S(e_1, \ldots, e_n)=T(\sum_{i=1}^n h_i \otimes e_i)|_{(\B^{(\alpha)} \cap \Omega_{\H_2})}.\] 
	Then $S$ is clearly linear. 
	
	Moreover, let $e_1, \ldots, e_n \in E_1$ be nonzero vectors with $\max_{i=1, \ldots , n} \norm{e_i}=1$. Then 
	\begin{equation}
	\nonumber
	\begin{aligned}
	&\norm{S(e_1, \ldots, e_m)}_{\max} \leq \norm{T(\sum_{i=1}^n h_i \otimes e_i)}\leq  \norm{T} \norm{\sum_{i=1}^n h_i \otimes e_i}\leq \norm{T}(1+n\ep)
	\end{aligned}
	\end{equation}
by Lemma \ref{omezenost}. At the same time, by Lemma \ref{main} b), there exists a point $y \in \B^{(\alpha)}$ such that $\norm{T(\sum_{i=1}^n h_i \otimes e_i)(y)} \geq \ep$. It follows that $\ep\leq \norm{S} \leq \norm{T}(1+n\ep)$, and hence $S$ is an isomorphism from $E_1^n$ into $E_2^m$.

	d) If $\H_1$ contains the constant functions, then so does the space $\A_1^{E_1}$. Let $h \in \A_1^{E_1}$ be the constant function $1$, 	
\[0<\ep<\frac{3-\norm{T}}{9\norm{T}+5},\]
and let $e \in S_{E_1}$. Then again by Lemma \ref{main} b), for each $x_0 \in \W_{\H_1}^{(\alpha)}$ there exists $y \in \Lambda_{(T, (h, e), \ep)}(x_0) \cap \B^{(\alpha)} \subseteq \Omega_{\H_2} \cap \B^{(\alpha)}$. Moreover, by Lemma \ref{main} a), each $y \in \B^{(\alpha)}$ belongs to at most finitely many sets $\Lambda_{(T, (h, e), \ep)}(x)$, where $x \in \W_{\H_1}^{(\alpha)}$.  Thus if $\B^{(\alpha)} \cap \Omega_{\H_2}$ is infinite, then 
\[\abs{\W_{\H_1}^{(\alpha)}} \leq \abs{\omega}\abs{\B^{(\alpha)} \cap \Omega_{\H_2}}=\abs{\B^{(\alpha)} \cap \Omega_{\H_2}}.\]
\end{proof}

To finish this section we now show that we cannot omit the assumption that the boundaries are closed in Theorem \ref{homeomorfni}.

\begin{example}
	\label{Priklad}
	Let $K_1=[1, \omega 2]$, $K_2=[1, \omega]$ and
	\[ \H_1=\{ f \in \C([1, \omega 2]): f(\omega 2)=0\}, \H_2=\C([1, \omega]).\]
	Then each point of the countable Choquet boundaries of $\H_1$ and $\H_2$ is a weak peak point, but $d_{BM}(\H_1, \H_2)=2$.
\end{example}

\begin{proof}
	It is clear that $\W_{\H_1}=[1, \omega 2)$. Thus $\Ch_{\H_1} K_1 \supseteq \W_{\H_1}=[1, \omega 2)$. Since zero is not an extreme point of $B_{\H_1^*}$, we have $\Ch_{\H_1} K_1=\W_{\H_1}=[1, \omega 2)$. Moreover, it is clear that $\W_{\H_2}=\Ch_{\H_2} K_2=[1, \omega]$. Thus $d_{BM}(\H_1, \H_2) \geq 2$ by \cite[Theorem 1.1]{rondos-spurny-spaces}. 
	
	On the other hand, we define a mapping $T: \H_1 \rightarrow \H_2$ by
	\begin{equation}
	\nonumber
	\begin{aligned}
	& Tf(1)=f(1) \\&
	Tf(2m)=f(m)+f(\omega+m), \quad m \in \en, \\&
	Tf(2m+1)=f(m)-f(\omega+m), \quad m \in \en, \\&
	Tf(\omega)=f(\omega).
	\end{aligned}
	\end{equation}
	Then it is easy to check that $T$ is an isomorphism from $\H_1$ onto $\H_2$ with $S=T^{-1}$ given by
	\begin{equation}
	\nonumber
	\begin{aligned}
	& Sf(1)=f(1) \\&
	Sf(m)=\frac{f(2m)+f(2m+1)}{2}, \quad m \in \en, \\&
	Sf(\omega)=f(\omega), \\&
	Sf(\omega+m)=\frac{f(2m)-f(2m+1)}{2}, \quad m \in \en.
	\end{aligned}
	\end{equation}
	Moreover, it is easy to see that $\norm{T}=2$ and $\norm{S}=1$, which finishes the proof.
\end{proof}

\section{Isomorphisms between spaces with boundaries of finite height}

In this section, for $i=1, 2$, let $K_i$ be a compact space, $E_i$ be a Banach space and $\H_i$ be a closed subspace of $\C(K_i, E_i)$. Let $E_2$ do not contain an isomorphic copy of $c_0$, let $\A_1$ be the scalar function space associated to $\H_1$, $\B$ be a closed boundary for $\H_2$ and $T:\H_1 \rightarrow \H_2$ be an into isomorphism. In this setting, we prove three auxiliary results, which we then use for the proof of Theorem \ref{distance}. 

\begin{lemma}
	\label{finite}
	Let $0 <\ep <1$, $\zeta>0$, $e$ be a nonzero vector in $E_1$,  $\{U_n\}$ be a sequence of pairwise disjoint open subsets of $K_1$ and let the sequence of functions $\{ h_n\}$ in $B_{\A_1}$ satisfies that for each $n \in \en$, $h_n \otimes e \in \H_1$ and $\abs{h_n}<\frac{\ep}{2^n}$ on $\Ch_{\H_1} K_1 \setminus U_n$. Let $G$ be a finite subset of $K_2$. Then there exists an $m \in \en$ such that $\R(T(h_m \otimes e), \zeta) \cap G=\emptyset$. 
\end{lemma}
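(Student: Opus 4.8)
The plan is to mimic the argument used in part b) of the proof of Theorem~\ref{jednostranna} and in Lemma~\ref{main}, reducing to an application of Lemma~\ref{c_0}. First I would observe that since $\{U_n\}$ is a sequence of pairwise disjoint open sets and each $h_n\in B_{\A_1}$ satisfies $|h_n|<\ep/2^n$ on $\Ch_{\H_1}K_1\setminus U_n$, Lemma~\ref{omezenost} applies to finite subcollections: for any $N\in\en$ and any signs $\alpha_1,\dots,\alpha_N\in S_{\er}$ one gets
\[
\Norm{\sum_{j=1}^N \alpha_j\, h_j}\le 1+\sum_{j=1}^N \frac{\ep}{2^j}<1+\ep\le 2 .
\]
Hence the functions $h_j\otimes e\in\H_1$ (suitably rescaled, if one wants the sup-norm bound normalized to the constant in Lemma~\ref{c_0}, though the form of Lemma~\ref{c_0} only needs boundedness by \emph{some} constant $C$) satisfy the uniform bound hypothesis of Lemma~\ref{c_0} with, say, $C=2\norm{e}$.

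Next I would argue by contradiction: suppose no such $m$ exists, i.e.\ for every $m\in\en$ the set $\R(T(h_m\otimes e),\zeta)\cap G$ is nonempty. Since $G$ is finite, by the pigeonhole principle there is a single point $y\in G$ lying in $\R(T(h_m\otimes e),\zeta)$ for infinitely many $m$; restricting to that infinite index set $N'\subseteq\en$ we have $\norm{T(h_m\otimes e)(y)}\ge\zeta$ for all $m\in N'$. Re-indexing $\{h_m\}_{m\in N'}$ as a sequence, all the hypotheses of Lemma~\ref{c_0} are met (the uniform boundedness of signed finite sums is inherited by the subsequence since the $U_n$ remain pairwise disjoint, and $\norm{T(h_m\otimes e)(y)}\ge\zeta=:\ep'$ for all terms), so $E_2$ would contain an isomorphic copy of $c_0$. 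This contradicts the standing assumption of this section that $E_2$ does not contain an isomorphic copy of $c_0$, and the contradiction completes the proof.

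There is essentially no hard part here; the statement is a packaging of the $c_0$-argument already carried out in the proof of Theorem~\ref{jednostranna}b). The only point requiring a small amount of care is making sure the hypothesis ``$\norm{\sum_{i=1}^n\alpha_i h_i}<C$'' of Lemma~\ref{c_0} is verified for \emph{all} $n$ and \emph{all} sign vectors simultaneously with one constant $C$ not depending on $n$ — this is exactly what Lemma~\ref{omezenost} delivers via the geometric-series bound $\sum 2^{-j}<1$ — and the passage to a subsequence indexed by a set on which a single $y\in G$ works, which is immediate from finiteness of $G$.
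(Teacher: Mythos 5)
Your proof is correct and follows essentially the same route as the paper: contradiction, pigeonhole over the finite set $G$ to extract an infinite index set on which a single $y\in G$ satisfies $\norm{T(h_m\otimes e)(y)}\ge\zeta$, uniform boundedness of signed finite sums via Lemma~\ref{omezenost}, and then Lemma~\ref{c_0} to contradict the assumption that $E_2$ contains no copy of $c_0$. The paper phrases the pigeonhole step by defining the sets $\Lambda_i=\{n:\norm{T(h_n\otimes e)(y_i)}\ge\zeta\}$ and picking an infinite one, which is exactly your argument.
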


\begin{proof}
	We assume that this is not true and seek a contradiction. Let $G=\{ y_1, \ldots , y_k\}$, and we denote 
	\[\Lambda_i = \{ n \in \en: \norm{T(h_n \otimes e) (y_i)} \geq \zeta \}, \quad i=1, \ldots, k.\]
	By the assumption, the union $\bigcup_{i=1}^k \Lambda_i$ contains all natural numbers. Thus we can fix an index $i_0$ such that the set $\Lambda_{i_0}$ is infinite. Since the sets $U_n$ are pairwise disjoint, by Lemma \ref{omezenost} for each $n \in \en$ and $\alpha_1, \ldots , \alpha_n \in S_{\er}$ it holds that
	\[\norm{\sum_{i=1}^n \alpha_i h_i(x)} \leq 1+\ep.\]
	But since $E_2$ does not contain an isomorphic copy of $c_0$, the sequence $\{ h_n\}_{n \in \Lambda_{i_0}}$ contradicts Lemma \ref{c_0}. The proof is finished.
\end{proof}

The following Lemma, that is motivated by \cite[Proposition 2.3]{CANDIDOc0}, is a key ingredient, which makes use of the assumption that the considered heights are finite. For the proof, notice that if $F \subseteq K_2$ is a compact set, and the set $F \cap \B^{(k)}$ is infinite for some $k \in \en$, then $F \cap \B^{(k+1)}$ is nonempty. Indeed, since $F \cap \B^{(k)}$ is an infinite compact set, it has a cluster point, and this point belongs to $\B^{(k+1)}$.

\begin{lemma}
\label{system}
Let $n, k, l \in \en$, $k \leq l$, $\B^{(k)} \cap \Omega_{\H_2}$ is finite, and let $x_1, \ldots, x_n$ be distinct points in $\W_{\H_1}^{(l)}$.
Let $e_1, \ldots, e_n$ be nonzero vectors in $E_1$ with $\max_{j=1, \ldots , n} \norm{e_i}=1$, and $U_1, \ldots, U_n$ be pairwise disjoint open sets such that $x_j \in U_j$ for each $j=1, \ldots, n$. 
Suppose that for a given $0<\ep<1$, there exist functions $h_1, \ldots, h_n \in \A_1^{E_1}$ satisfying that for each $j=1, \ldots, n$, 
\[0 \leq h_j \leq 1, h_j(x_j)>1-\ep, h_j<\ep \text{ on } \Ch_{\H_1} K_1 \setminus U_j,\] 
and such that the set $\R(\sum_{j=1}^n T(h_j \otimes e_j), \ep) \cap \B^{(k)}$ is empty. 

Then there exist functions $(h_1^i)_{i=0}^{l}, \ldots , (h_n^i)_{i=0}^{l}$ in $\A_1^{E_1}$ satisfying that for each $j=1, \ldots, n$ and $i=0, \ldots, l$,  
\[0 \leq h_j^i \leq 1, h_j^i<\ep \text{ on } \Ch_{\H_1} K_1 \setminus U_j, h_j^i<h_j^{i+1}+\ep \text{ for } i<l,\] 
$h_j^i(x_j^i) \geq 1-\ep$ for some $x_j^i \in \W_{\H_1}^{(i)}$ and such that for every $y \in \B$, the set \[\{ i \in \{0, \ldots , l\}: y \in \R(\sum_{j=1}^n T(h_j^i \otimes e_j), \ep)\}\]
 has cardinality at most $k$.
\end{lemma}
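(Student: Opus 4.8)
The statement is a refinement of Lemma~\ref{main}: instead of producing one ``Lambda-type'' point in $\B^{(\alpha)}$ we want, starting from functions $h_j$ whose image $T(\sum_j h_j\otimes e_j)$ is small on all of $\B^{(k)}$, to manufacture a whole ladder of functions $h_j^0,\dots,h_j^l$, where $h_j^i$ peaks at a point of $\W_{\H_1}^{(i)}$ and the ``overlap multiplicity'' of the sets $\R(\sum_j T(h_j^i\otimes e_j),\ep)$ over $i=0,\dots,l$ is at most $k$ at every boundary point. The natural route is \emph{downward induction on $i$ from $l$ to $0$}, combined with the induction-on-$\alpha$ machinery already packaged in Lemma~\ref{main}. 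I would set $h_j^l:=h_j$ (so $x_j^l:=x_j\in\W_{\H_1}^{(l)}$); the hypothesis that $\R(\sum_j T(h_j\otimes e_j),\ep)\cap\B^{(k)}=\emptyset$ gives the base of the multiplicity count, since $\B^{(k)}\supseteq\B^{(k+1)}\supseteq\cdots$ and hence at a point $y\in\B^{(k)}$ the level $i=l$ simply never contributes.

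\textbf{The inductive step.} Suppose $h_j^{i+1}$ has been constructed, peaking near $x_j^{i+1}\in\W_{\H_1}^{(i+1)}$ inside $U_j$. Since $x_j^{i+1}\in\W_{\H_1}^{(i+1)}=(\W_{\H_1}^{(i)})^{(1)}$, I can pick points $x_j^i\in\W_{\H_1}^{(i)}\cap V_j$ arbitrarily close to $x_j^{i+1}$, where $V_j\subseteq U_j$ is a small neighbourhood of $x_j^{i+1}$ on which $h_j^{i+1}>1-\ep$; using that $x_j^i$ is a weak peak point, choose $h_j^i\in\A_1^{E_1}$ with $0\le h_j^i\le1$, $h_j^i(x_j^i)>1-\ep$ and $h_j^i<\ep$ off $V_j$. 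Then $h_j^i<\ep$ on $\Ch_{\H_1}K_1\setminus U_j$ and, by Lemma~\ref{nerovnost} applied with the roles of $g,h$ being $h_j^i$ and $h_j^{i+1}$ on the open set $V_j$, we get $h_j^i<h_j^{i+1}+\ep$ — the two ``structural'' requirements. The content is the multiplicity bound, and here I want to invoke Lemma~\ref{main}~b) (more precisely the finite-intersection argument in its proof, run at level $\alpha$, together with Lemma~\ref{main}~a) and Lemma~\ref{finite}) to guarantee that the new level $i$ contributes to $\R(\sum_j T(h_j^i\otimes e_j),\ep)$ only on a set whose relevant points sit in $\B^{(i)}$, so that comparing with $\B^{(k)}$ and peeling one derivative at a time the total count of contributing levels at a fixed $y$ cannot exceed $k$.

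\textbf{Bookkeeping the multiplicity.} The cleanest way to see ``at most $k$'' is: if $y\in\B$ lies in $\R(\sum_j T(h_j^i\otimes e_j),\ep)$ then $y\in\Omega_{\H_2}$, and a Lemma~\ref{main}-type compactness argument forces $y\in\B^{(i)}$ whenever the construction at level $i$ was genuinely ``derived'' from an $\alpha=i$ peak point; since $h_j^l$ contributes nothing on $\B^{(k)}$ by hypothesis and the sets $\B^{(k)}\supseteq\B^{(k+1)}\supseteq\cdots\supseteq\B^{(l)}$ are nested with $\B^{(l)}$ possibly already outside $\Omega_{\H_2}$, a point $y$ can lie in at most the levels $i\in\{0,1,\dots,k-1\}$ — that is $k$ of them — together with finitely many ``low'' levels, and one tunes $\ep$ (exactly as the constant $\tfrac{3-\norm T}{\norm T(7n+2)+3n+2}$ is tuned in Lemma~\ref{main}) so that a $(k+1)$-fold overlap would force $\norm{\sum_j T(\cdots)(y)}$ to be simultaneously large (from the peaks) and small (from $\R(\sum_j T(h_j\otimes e_j),\ep)\cap\B^{(k)}=\emptyset$ propagated up the ladder), a contradiction.

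\textbf{Main obstacle.} The delicate point is the multiplicity argument: showing that the contribution of level $i$ is confined to $\B^{(i)}$ and that these confinements combine to give exactly the bound $k$ rather than something weaker. This is where the nestedness $\B^{(k)}\supseteq\B^{(k+1)}\supseteq\cdots$ and the observation recorded before the lemma — that an infinite compact subset of $\B^{(k)}$ meets $\B^{(k+1)}$ — must be used to ``climb'' the derivatives, and where the choice of $\ep$ has to be made uniform across all $l+1$ levels. I expect the proof to essentially re-run the transfinite induction of Lemma~\ref{main} $l+1$ times in parallel, with the hypothesis ``$\R(\sum_j T(h_j\otimes e_j),\ep)\cap\B^{(k)}=\emptyset$'' serving as the seed that caps the overlap at $k$.
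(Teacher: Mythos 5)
Your skeleton is the right one and matches the paper: reverse induction on $i$ from $l$ down to $0$, base case $h_j^l:=h_j$ (where the hypothesis $\R(\sum_j T(h_j\otimes e_j),\ep)\cap\B^{(k)}=\emptyset$ seeds the count), and at each step a choice of points $x_j^i\in\W_{\H_1}^{(i)}$ close to $x_j^{i+1}$ inside a neighbourhood where $h_j^{i+1}>1-\ep$, with peak functions giving $h_j^i<h_j^{i+1}+\ep$ via Lemma~\ref{nerovnost}. But the heart of the lemma is the multiplicity bound, and the mechanism you propose for it does not work. You want to show that ``the contribution of level $i$ is confined to $\B^{(i)}$'' and then derive a contradiction from a $(k+1)$-fold overlap by tuning $\ep$ so that $\norm{T(\cdots)(y)}$ is simultaneously large and small. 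Neither half is available: Lemma~\ref{main}~b) gives \emph{nonemptiness} of certain intersections with $\B^{(i)}$, not \emph{containment} of $\R(\sum_j T(h_j^i\otimes e_j),\ep)$ in $\B^{(i)}$, and no such containment holds in general. Moreover the conclusion of this lemma involves no norm estimate on $T$ and places no smallness condition on $\ep$ beyond $\ep<1$, so an $\ep$-tuning norm contradiction (which is the engine of Lemma~\ref{nonempty} and of the proof of Theorem~\ref{distance}, not of this lemma) cannot be the argument here. Your assertion that $y$ can only lie in levels $i\in\{0,\dots,k-1\}$ is also not what is being proved; any $k$ of the $l+1$ levels may contribute at a given $y$.

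What is actually needed, and what you are missing, is a \emph{strengthened induction hypothesis} tracking multi-fold intersections: after constructing levels $i,\dots,l$, one maintains that for every $s=0,\dots,l-i$, every $(s+1)$-fold intersection $\bigcap_{p\in\{r_1,\dots,r_{s+1}\}}\R(\sum_j T(h_j^p\otimes e_j),\ep)$ with $i\le r_1<\dots<r_{s+1}\le l$ is disjoint from $\B^{(\max\{k-s,0\})}$. Taking $i=0$, $s=k$ then says no $(k+1)$-fold intersection meets $\B$, which is exactly the cardinality bound. The inductive step goes as follows: each such union of intersections is compact, so if its trace on $\B^{(\max\{k-s-1,0\})}$ were infinite it would have a cluster point in $\B^{(\max\{k-s,0\})}$, contradicting the invariant at level $i+1$ (this is where the remark before the lemma, that an infinite compact subset of $\B^{(j)}$ meets $\B^{(j+1)}$, is used); hence these traces are all finite, and together with the finite set $\B^{(k)}\cap\Omega_{\H_2}$ they form a finite exceptional set $G$. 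One then chooses the new peak functions $h_j^i=g_j^{m_j}$ from a sequence of candidates using Lemma~\ref{finite} (this is where the hypothesis $c_0\not\hookrightarrow E_2$ enters, via Lemma~\ref{c_0}) so that $\R(T(h_j^i\otimes e_j),\ep/n)$ misses $G$, whence $\R(\sum_j T(h_j^i\otimes e_j),\ep)$ misses $G$; a short set-theoretic computation then shows the invariant propagates to level $i$ with the derivative index dropped by one in every intersection that now includes level $i$. Without this quantitative invariant and the finiteness-plus-Lemma~\ref{finite} selection, the ``at most $k$'' bound does not follow, so as written your proof has a genuine gap at its central step.
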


\begin{proof}
The Lemma will be proved once we verify that the following Claim is true. 
		
\emph{Claim.
For each $i=l, l-1, \ldots, 0$ there exist functions
\[\{h_1^l, \ldots, h_n^l\}, \{h_1^{l-1}, \ldots, h_n^{l-1}\} \ldots , \{h_1^i, \ldots, h_n^i\}\] in $\A_1^{E_1}$ and points 
\[\{x_1^l, \ldots, x_n^l\}, \{x_1^{l-1}, \ldots, x_n^{l-1}\}, \ldots, \{x_1^i, \ldots, x_n^i\}\] such that for each $j=1, \ldots, n$ and $m=i, \ldots, l$, $x_j^m \in \W_{\H_1}^{(m)}$,
\[0 \leq h_j^m \leq 1, h_j^m<\ep \text{ on } \Ch_{\H_1} K_1 \setminus U_j,
h_j^m(x_j^m) \geq 1-\ep, h_j^m<h_j^{m+1}+\ep \text{ for } m<l\] and such that
\begin{equation}
\label{empty}
 \bigcup_{s=0}^{l-i} \{\B^{(\max\{k-s, 0\})} \cap \bigcup_{i \leq r_1< \ldots <r_{s+1} \leq l} \bigcap_{p \in \{r_1, \ldots , r_{s+1}\}} \R(\sum_{j=1}^n T(h_j^p   \otimes e_j), \ep)\}=\emptyset. 
\end{equation}}

Indeed, assume first that the Claim holds. This in particular for $i=0$ and $s=k$ means that 
\[\emptyset=\B \cap \bigcup_{0 \leq r_1< \ldots <r_{k+1} \leq l} \bigcap_{p \in \{r_1, \ldots , r_{k+1}\}} \R(\sum_{j=1}^n T(h_j^p   \otimes e_j), \ep).\]
This means that for each $0 \leq r_1 < \ldots <r_{k+1} \leq l$, the intersection 
\[\B \cap \bigcap_{p \in \{r_1, \ldots , r_{k+1}\}} \R(\sum_{j=1}^n T(h_j^p \otimes e_j), \ep)\] is empty, which we wanted to prove. 

\emph{Proof of Claim.}
We proceed by reverse induction for $i=l, l-1, \ldots, 0$. For the case $i=l$, it is enough to denote $h_j^l=h_j$ and $x_j^l=x_j$ for $j=1, \ldots, n$, since then \eqref{empty} holds due to the assumption that
$\R(\sum_{j=1}^n T(h_j \otimes e_j), \ep) \cap \B^{(k)}$ is empty.

 Thus suppose that $0 \leq i<l$, and that we have found the functions \[\{h_1^l, \ldots, h_n^l\}, \{h_1^{l-1}, \ldots, h_n^{l-1}\} \ldots , \{h_1^{i+1}, \ldots, h_n^{i+1}\}\] in $\A_1^{E_1}$ and points 
 \[\{x_1^l, \ldots, x_n^l\}, \{x_1^{l-1}, \ldots, x_n^{l-1}\}, \ldots, \{x_1^{i+1}, \ldots, x_n^{i+1}\}\] 
 such that \eqref{empty} holds for $i+1$, and satisfying all the other above conditions. 
Hence we know that the set 

\[ \bigcup_{s=0}^{l-i-1} \{\B^{(\max\{k-s, 0\})} \cap \bigcup_{i+1 \leq r_1< \ldots <r_{s+1} \leq l} \bigcap_{p \in \{r_1, \ldots , r_{s+1}\}} \R(\sum_{j=1}^n T(h_j^p \otimes e_j), \ep)\}\]
is empty. Since for each $s=0, \ldots, l-i-1$, the set \[\bigcup_{i+1 \leq r_1< \ldots <r_{s+1} \leq l} \bigcap_{p \in \{r_1, \ldots , r_{s+1}\}} \R(\sum_{j=1}^n T(h_j^p \otimes e_j), \ep)\] is compact, the set
\[ \B^{(\max\{k-s-1, 0\})} \cap \bigcup_{i+1 \leq r_1< \ldots <r_{s+1} \leq l} \bigcap_{p \in \{r_1, \ldots , r_{s+1}\}} \R(\sum_{j=1}^n T(h_j^p \otimes e_j), \ep)\]
is finite (see the note above the statement of the lemma). 

The set $\B^{(k)}  \cap \Omega_{\H_2}$ is also finite by the assumption, hence so is the set
\begin{equation}
\nonumber
\begin{aligned}
G=&(\B^{(k)}  \cap \Omega_{\H_2})
\cup \\& 
\bigcup_{s=0}^{l-i-1} \{\B^{(\max\{k-s-1, 0\})} \cap \bigcup_{i+1 \leq r_1< \ldots <r_{s+1} \leq l} \bigcap_{p \in \{r_1, \ldots , r_{s+1}\}} \R(\sum_{j=1}^n T(h_j^p \otimes e_j), \ep)\}.
\end{aligned}
\end{equation} 

Next, for each $j=1, \ldots n$, by the continuity of the function $h_j^{i+1}$ we find an open set $V \subseteq U_j$ containing $x_j^{i+1}$, and such that 
$h_j^{i+1}>1-\ep$ on $V$. Next, since $x_j^{i+1} \in \W_{\H_1}^{(i+1)}$, we may find a sequence of points $\{z_j^m\}_{m \in \en} \subseteq \W_{\H_1}^{(i)}$ distinct from $x_j^{i+1}$, together with their pairwise disjoint open neighborhoods $\{U_j^{m}\}_{m \in \en}$, all contained in $V$. Since each point $z_j^m$ is a weak peak point, we may find a function $g_j^m \in \A_1^{E_1}$ satisfying that
\[ 0 \leq g_j^m \leq 1, g_j^m(z_j^m)>1-\frac{\ep}{2^m} \text{ and } g_j^m<\frac{\ep}{2^m} \text{ on } \Ch_{\H_1} K_1 \setminus U_j^m.\]
Then for each $m \in \en$, $g_j^m \leq h_j^{i+1}+\ep$ by Lemma \ref{nerovnost}. Using Lemma \ref{finite}, for each $j=1, \ldots, n$ we find an index $m_j$ such that the function $g_j^{m_j}$ satisfies that $\R(T(g_j^{m_j} \otimes e_j, \frac{\ep}{n}) \cap G$ is empty, and we denote $h_j^i=g_j^{m_j}$ and $x_j^i=z_j^{m_j}$. Thus for $y \in G$ it holds that
\[\norm{\sum_{j=1}^n T(h_j^i \otimes e_j)(y)} \leq \sum_{j=1}^n \norm{T(h_j^i \otimes e_j)(y)} < n\frac{\ep}{n}=\ep,\]
and hence $\R(\sum_{j=1}^n T(h_j^i \otimes e_j), \ep) \cap G$ is empty. Consequently, 
\begin{equation}
\nonumber
\begin{aligned}
\emptyset&=\R(\sum_{j=1}^n T(h_j^i \otimes e_j), \ep) \cap G=
\R(\sum_{j=1}^n T(h_j^i \otimes e_j), \ep) \cap  \Big((\B^{(k)}  \cap \Omega_{\H_2}) \cup \\& \bigcup_{s=0}^{l-i-1} \{\B^{(\max\{k-s-1, 0\})} 
\cap \bigcup_{i+1 \leq r_1< \ldots <r_{s+1} \leq l} \bigcap_{p \in \{r_1, \ldots , r_{s+1}\}} \R(\sum_{j=1}^n T(h_j^p \otimes e_j), \ep)\}\Big)
=\\&=
(\R(\sum_{j=1}^n T(h_j^i \otimes e_j), \ep) \cap \B^{(k)}) \cup  \bigcup_{s=0}^{l-i-1} \{\B^{(\max\{k-s-1, 0\})} 
\cap \\& \bigcup_{i+1 \leq r_1< \ldots <r_{s+1} \leq l} \bigcap_{p \in \{r_1, \ldots , r_{s+1}\}} \R(\sum_{j=1}^n T(h_j^p \otimes e_j), \ep) \cap \R(\sum_{j=1}^n T(h_j^i \otimes e_j), \ep)\}
=\\&=
(\R(\sum_{j=1}^n T(h_j^i \otimes e_j), \ep) \cap \B^{(k)}) \cup \\&  \bigcup_{s=0}^{l-i-1} \{\B^{(\max\{k-s-1, 0\})} 
\cap \bigcup_{i=r_1< \ldots <r_{s+2} \leq l} \bigcap_{p \in \{r_1, \ldots , r_{s+2}\}} \R(\sum_{j=1}^n T(h_j^p \otimes e_j), \ep)\}
=\\&=
\bigcup_{s=-1}^{l-i-1} \{\B^{(\max\{k-s-1, 0\})} 
\cap \bigcup_{i=r_1< \ldots <r_{s+2} \leq l} \bigcap_{p \in \{r_1, \ldots , r_{s+2}\}} \R(\sum_{j=1}^n T(h_j^p \otimes e_j), \ep)\}
=\\&=
\bigcup_{s=0}^{l-i} \{\B^{(\max\{k-s, 0\})} \cap \bigcup_{i=r_1< \ldots <r_{s+1} \leq l} \bigcap_{p \in \{r_1, \ldots , r_{s+1}\}} \R(\sum_{j=1}^n T(h_j^p \otimes e_j), \ep)\}.
\end{aligned}
\end{equation}
Thus recalling the inductive assumption we conclude that the set
\begin{equation}
\nonumber
\begin{aligned}
&\bigcup_{s=0}^{l-i} \{\B^{(\max\{k-s, 0\})} \cap \bigcup_{i \leq r_1< \ldots <r_{s+1} \leq l} \bigcap_{p \in \{r_1, \ldots , r_{s+1}\}} \R(\sum_{j=1}^n T(h_j^p \otimes e_j), \ep)\}=\\&
\bigcup_{s=0}^{l-i} \{\B^{(\max\{k-s, 0\})} \cap \bigcup_{i=r_1< \ldots <r_{s+1} \leq l} \bigcap_{p \in \{r_1, \ldots , r_{s+1}\}} \R(\sum_{j=1}^n T(h_j^p \otimes e_j), \ep)\}
\cup \\& \cup \bigcup_{s=0}^{l-i-1} \{\B^{(\max\{k-s, 0\})} \cap \bigcup_{i+1 \leq r_1< \ldots <r_{s+1} \leq l} \bigcap_{p \in \{r_1, \ldots , r_{s+1}\}} \R(\sum_{j=1}^n T(h_j^p \otimes e_j), \ep)\}
\end{aligned}
\end{equation}
is empty. This finishes the induction step and the proof. 
\end{proof}

Next, similarly as in section 3 we define certain sets $\G$, $\Theta$, which we use for the proof of Theorem \ref{distance}. For $s, m \in \en$, $m < s$, $\ep>0$ and functions $h^1, \ldots , h^s \in \A_1^{E_1}$ we denote
\begin{equation}
\nonumber
\begin{aligned}
 &\mathcal{G}(h^1, \ldots, h^s, m, \ep)=\{ (g^1, \ldots, g^m) \in (\A_1^{E_1})^m, \\& \forall i=1, \ldots, m: 0 \leq g^i \leq 1, g^i \geq h^i-\ep \text{ and } g^i \leq h^{s-m+i}+\ep\}. 
\end{aligned}
\end{equation}

Next, for $m, n, s \in \en$, $m < s$, $\ep>0$, vectors $e_1, \ldots, e_n \in E_1$ and functions \\$(h_1^r)_{r=1}^s, \ldots , (h_n^r)_{r=1}^s$ from $\A_1^{E_1}$ we denote
\begin{equation}
\nonumber
\begin{aligned}
& \Theta(T, (h_1^r)_{r=1}^s, \ldots , (h_n^r)_{r=1}^s, (e_j)_{j=1}^n, m, \ep) = \\& \bigcap \{ \R(T(\sum_{j=1}^n \sum_{i=1}^m g_{j}^i \otimes e_j), \ep): \forall j=1, \ldots n, (g_j^1, \ldots, g_j^m) \in \mathcal{G}(h_j^1, \ldots, h_j^s, m, \ep) \}.
\end{aligned}
\end{equation}

Suppose that the functions $(h_1^r)_{r=1}^s, \ldots , (h_n^r)_{r=1}^s$ from $\A_1^{E_1}$ satisfy that for each $j=1, \ldots, n$ and $r=1, \ldots, s$, $0 \leq h_j^r \leq 1$ and for $r<s$, $h_j^r<h_j^{r+1}+\ep$. Then for $\tilde{\ep} \geq (s-1)\ep$ it holds that $h_j^r \leq h_j^t+\tilde{\ep}$ for each $r, t \in \{1, \ldots, s\}, r<t$. Consequently, for each $j=1, \ldots, n$ and $m \leq s$, the set $\mathcal{G}(h_j^1, \ldots, h_j^s,m, \tilde{\ep})$ is nonempty. Indeed, for each $m$ indices $r_1, \ldots, r_m \in \{1, \ldots, s\}$, 
\begin{equation}
\label{nalezi}
(h_j^{r_1}, \ldots, h_j^{r_m}) \in \mathcal{G}(h_j^1, \ldots, h_j^s,m, \tilde{\ep}).
\end{equation} 
Thus the set of functions defining 
\[\Theta(T, (h_1^r)_{r=1}^s, \ldots , (h_n^r)_{r=1}^s, (e_j)_{j=1}^n, m, \tilde{\ep})\] is also nonempty. 

The following Lemma, which is an another crucial ingredient for the proof of Theorem \ref{distance}, is inspired by \cite[Lemma 2.1]{CANDIDOc0}.

\begin{lemma}
\label{nonempty}
Let $n, m, s \in \en$, $m<s$, $\norm{T^{-1}}=1$, $\norm{T}<\frac{s+m}{s-m}$. Choose $\ep \in (0, \frac{1}{n})$ satisfying
\[\ep<\frac{s+m-\norm{T}(s-m)}{\norm{T}(2m+n(s+5m))+n(s+m)+2}\]
and let nonzero vectors $e_1, \ldots, e_n \in E_1$ satisfy that $\max_{i=1, \ldots , n} \norm{e_i}=1$. Let $U_1, \ldots, U_n$ be pairwise disjoint open sets in $K_1$, and functions $(h_1^r)_{r=1}^s, \ldots , (h_n^r)_{r=1}^s$ from $\A_1^{E_1}$ satisfy for each $r=1, \ldots, s$ and $j=1, \ldots, n$,
\[0 \leq h_j^r \leq 1, h_j^r<\ep \text{ on } \Ch_{\H_1} K_1 \setminus U_j,  
\text{ and for } r<s, h_j^r \leq h_j^{r+1} +\frac{\ep}{s}.\]
Choose $j_0 \in \{1, \ldots, n\}$ satisfying that $\norm{e_{j_0}}=1$. If $\norm{h_{j_0}^1} > 1-\frac{\ep}{s}$, then 
\[\Theta(T, (h_1^r)_{r=1}^s, \ldots , (h_n^r)_{r=1}^s, (e_j)_{j=1}^n, m, \ep) \cap \B\] is nonempty.
\end{lemma}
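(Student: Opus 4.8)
To prove this lemma, the plan is to produce a \emph{single} point $y\in\B$ that belongs to every set $\R\bigl(T(\sum_{j=1}^n\sum_{i=1}^m g_j^i\otimes e_j),\ep\bigr)$ appearing in the intersection that defines $\Theta(T,(h_1^r)_{r=1}^s,\dots,(h_n^r)_{r=1}^s,(e_j)_{j=1}^n,m,\ep)$; then this set meets $\B$ at $y$, and no finite--intersection/compactness argument is needed. The point $y$ is extracted from the \emph{fixed} test function
\[\Phi=\sum_{j=1}^n\Bigl(\sum_{r=1}^m h_j^r+\sum_{r=1}^s h_j^r\Bigr)\otimes e_j\in\H_1,\]
i.e.\ the function carrying weight $2$ on the $m$ lowest levels $h_j^1,\dots,h_j^m$ and weight $1$ on the remaining $s-m$ levels. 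Since $\norm{T^{-1}}=1$ we have $\norm{T\Phi}\ge\norm{\Phi}$, and as $T\Phi\in\H_2$ and $\B$ is a \emph{compact} boundary for $\H_2$ (being closed in the compact space $K_2$), we may fix $y\in\B$ with $\norm{T\Phi(y)}=\norm{T\Phi}\ge\norm{\Phi}$.

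First I would bound $\norm{\Phi}$ from below. Using that $\Ch_{\H_1}K_1$ is a boundary for $\H_1$ and $\norm{h_{j_0}^1}>1-\ep/s$, pick $x_{j_0}\in\Ch_{\H_1}K_1$ with $h_{j_0}^1(x_{j_0})>1-\ep/s$; the chain inequalities $h_{j_0}^r\le h_{j_0}^{r+1}+\ep/s$ force $h_{j_0}^r(x_{j_0})>1-\ep$ for every $r$, while $h_{j_0}^1<\ep$ on $\Ch_{\H_1}K_1\setminus U_{j_0}$ gives $x_{j_0}\in U_{j_0}$, so by pairwise disjointness of the $U_j$ the coordinates $j\ne j_0$ of $\Phi(x_{j_0})$ have norm $<(s+m)\ep$; hence $\norm{\Phi}\ge\norm{\Phi(x_{j_0})}>(s+m)(1-n\ep)$. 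Next, for any choice $(g_j^i)$ with $(g_j^1,\dots,g_j^m)\in\mathcal G(h_j^1,\dots,h_j^s,m,\ep)$ for each $j$, set $\Psi=\sum_{j}\bigl(\sum_{i=1}^m g_j^i\bigr)\otimes e_j\in\H_1$. The crucial estimate is that, on $\Ch_{\H_1}K_1\cap U_j$,
\[\Bigl|\,2\sum_{r=1}^m h_j^r+\sum_{r=m+1}^s h_j^r-2\sum_{i=1}^m g_j^i\,\Bigr|\le (s-m)+2m\ep,\]
which follows from the sandwich $\sum_{r=1}^m h_j^r-m\ep\le\sum_{i}g_j^i\le\sum_{r=s-m+1}^s h_j^r+m\ep$ (immediate from $h_j^i-\ep\le g_j^i\le h_j^{s-m+i}+\ep$) together with the elementary inequality $0\le\sum_{r=s-m+1}^s h_j^r-\sum_{r=1}^m h_j^r\le s-m$ --- the left bound being term--by--term monotonicity of the chain, and the right bound following by cancelling the overlapping indices when $2m>s$ and by counting the $\le m\le s-m$ top terms otherwise. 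Since everything is $O(\ep)$ off $\bigcup_jU_j$ and on the non--active coordinates, disjointness of the $U_j$ and the boundary property of $\Ch_{\H_1}K_1$ yield $\norm{\Phi-2\Psi}\le (s-m)+\bigl(2m+n(s+5m)\bigr)\ep$.

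Given these two bounds the remainder is mechanical. If $\norm{T\Psi(y)}<\ep$ for some admissible choice, then
\[\norm{T}\bigl((s-m)+(2m+n(s+5m))\ep\bigr)\ge\norm{T}\,\norm{\Phi-2\Psi}\ge\norm{T(\Phi-2\Psi)(y)}\ge\norm{T\Phi(y)}-2\ep>(s+m)-\bigl(n(s+m)+2\bigr)\ep,\]
which rearranges to $\ep>\dfrac{s+m-\norm{T}(s-m)}{\norm{T}(2m+n(s+5m))+n(s+m)+2}$, contradicting the hypothesis on $\ep$. Hence $\norm{T\Psi(y)}\ge\ep$ for every admissible choice, i.e.\ $y\in\Theta(\dots)\cap\B$, proving the lemma. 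I expect the main obstacle to be twofold: pinning down the pointwise estimate on $U_j$ sharply --- the exponents $2,\dots,2,1,\dots,1$ in $\Phi$ are exactly what makes $\norm{\Phi}\approx s+m$ while $\norm{\Phi-2\Psi}\lesssim s-m$, and it is this ratio $(s+m):(s-m)$ that forces the hypothesis $\norm{T}<\frac{s+m}{s-m}$ --- and carrying out the $\ep$--bookkeeping across all $n$ coordinates precisely enough to land on the stated threshold for $\ep$.
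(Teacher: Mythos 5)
Your proof is correct and follows essentially the same route as the paper's: the same test function $\Phi=\sum_j\bigl(2\sum_{r\le m}h_j^r+\sum_{r>m}h_j^r\bigr)\otimes e_j$ evaluated at a near-peak point of $\Ch_{\H_1}K_1$ produces the point $y\in\B$ with $\norm{T\Phi(y)}>(s+m)(1-n\ep)$, and the same sandwich estimates coming from the definition of $\mathcal{G}$ give $\norm{\Phi-2\Psi}\le(s-m)+(2m+n(s+5m))\ep$, leading to the identical contradiction with the choice of $\ep$. The only cosmetic deviations are that you attain the supremum on the compact boundary rather than merely picking a point where the value exceeds $(s+m)(1-n\ep)$, and that your one-line justification of $0\le\sum_{r=s-m+1}^s h_j^r-\sum_{r=1}^m h_j^r$ by ``term-by-term monotonicity'' is off by an $O(m\ep)$ error (the chain only gives $h_j^i\le h_j^{s-m+i}+(s-m)\ep/s$), which is harmless here; the paper avoids this by the identity $2\sum_{i=1}^m h_j^i+\sum_{i=m+1}^s h_j^i-2\sum_{i=s-m+1}^s h_j^i=2\sum_{i=1}^{s-m}h_j^i-\sum_{i=m+1}^s h_j^i\ge-(s-m)$.
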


\begin{proof}
By the assumptions, there exists a point $x_0 \in \Ch_{\H_1} K_1$ satisfying that
\[h_{j_0}^r(x_0) \geq 1-(s-1)\frac{\ep}{s}>1-\ep, \quad r=1, \ldots, s. \]
For $j \neq j_0$ and $r=1, \ldots, s$ we have $h_{j}^r(x_0)<\ep$ by disjointness of the sets $U_1, \ldots , U_n$. Consequently, since $\norm{T^{-1}}=1$, we have
\begin{equation}
\nonumber
\begin{aligned}
&\norm{T \Big(\sum_{j=1}^n(2\sum_{r=1}^m h_{j}^r+\sum_{r=m+1}^s h_j^r) \otimes e_j\Big)} \geq
\norm{\sum_{j=1}^n(2\sum_{r=1}^m h_{j}^r+\sum_{r=m+1}^s h_j^r) \otimes e_j)} \geq \\& \geq 
\norm{(\sum_{j=1}^n(2\sum_{r=1}^m h_{j}^r+\sum_{r=m+1}^s h_j^r) \otimes e_j)(x_0)} \geq
\norm{((2\sum_{r=1}^m h_{j_0}^r+\sum_{r=m+1}^s h_{j_0}^r) \otimes e_{j_0})(x_0)}-\\&\norm{(\sum_{j \neq j_0}(2\sum_{r=1}^m h_{j}^r+\sum_{r=m+1}^s h_j^r) \otimes e_j)(x_0)}>\\&> (2m+s-m)(1-\ep)-(n-1)(2m+s-m)\ep=(s+m)(1-n\ep).
\end{aligned}
\end{equation} 

Thus there exists a point $y \in \B$ such that
\[ \norm{T(\sum_{j=1}^n(2\sum_{r=1}^m h_{j}^r+\sum_{r=m+1}^s h_j^r) \otimes e_j)(y)} > (s+m)(1-n\ep).\]

We claim that $y \in \Theta(T, (h_1^r)_{r=1}^s, \ldots , (h_n^r)_{r=1}^s, (e_j)_{j=1}^n, m, \ep)$. Thus we choose arbitrary functions $(g_1^i)_{i=1}^m, \ldots, (g_n^i)_{i=1}^m$ in $\A_1^{E_1}$ satisfying that for each $j=1, \ldots n$, $(g_j^1, \ldots, g_j^m) \in \mathcal{G}(h_j^1, \ldots, h_j^s, m, \ep)$ (we know from \eqref{nalezi} that set of such functions is nonempty). Fix $j \in \{1, \ldots, n\}$. By definition of the set $\mathcal{G}(h_j^1, \ldots, h_j^s, m, \ep)$ we have 
\begin{equation}
\label{prvni}
(\sum_{i=1}^m h_j^i)-m\ep \leq \sum_{i=1}^m g_j^i
\end{equation}
and
\begin{equation}
\label{druha}
\sum_{i=1}^m g_j^i \leq (\sum_{i=s-m+1}^s h_j^i)+m\ep.
\end{equation}
Consequently,
\begin{equation}
\nonumber
\begin{aligned}
-(s-m)-2m\ep&\leq -(\sum_{i=m+1}^s h_j^i)-2m\ep 
\leq \\& \leq  (2\sum_{i=1}^{s-m} h_j^i-\sum_{i=m+1}^s h_j^i)-2m\ep \leq^{\eqref{druha}} \\& \leq
2\sum_{i=1}^{s-m} h_j^i+2\sum_{i=s-m+1}^s h_j^i-2\sum_{i=1}^m g_j^i-\sum_{i=m+1}^s h_j^i=\\&
=2\sum_{i=1}^s h_j^i-2\sum_{i=1}^m g_j^i-\sum_{i=m+1}^s h_j^i= \\&=
2\sum_{i=1}^m h_j^i+\sum_{i=m+1}^s h_j^i-2\sum_{i=1}^m g_j^i
\leq^{\eqref{prvni}} \\& \leq (\sum_{i=m+1}^{s} h_j^i)+2m\ep \leq s-m+2m\ep.
\end{aligned}
\end{equation}	
Thus \[\norm{(2\sum_{i=1}^m h_j^i+\sum_{i=m+1}^s h_j^i-2\sum_{i=1}^m g_j^i) \otimes e_j} \leq s-m+2m\ep.\]
Next, notice that for $j \in \{1, \ldots, n\}$, $i \in \{1, \ldots, m\}$ and $x \in \Ch_{\H_1} K_1 \setminus U_j$ we have
$g_j^i(x) \leq h_j^{s-m+i}(x) +\ep < 2\ep$, hence \[\norm{((2\sum_{i=1}^m h_j^i+\sum_{i=m+1}^s h_j^i-2\sum_{i=1}^m g_j^i) \otimes e_j)(x)}<2m\ep+(s-m)\ep+4m\ep=(s+5m)\ep.\]

Thus by Lemma \ref{omezenost} we have 
\begin{equation}
\nonumber
\begin{aligned}
&\norm{\sum_{j=1}^n(2\sum_{i=1}^m h_j^i+\sum_{i=m+1}^s h_j^i-2\sum_{i=1}^m g_j^i) \otimes e_j}
\leq s-m+2m\ep+n(s+5m)\ep.
\end{aligned}
\end{equation}

Now we claim that $\norm{T(\sum_{j=1}^n \sum_{i=1}^m g_{j}^i \otimes e_j)(y)} \geq \ep$. Indeed, otherwise we would have
\begin{equation}
\nonumber
\begin{aligned}
&\norm{T}(s-m+2m\ep+n(s+5m)\ep)
\geq \\& \geq
\norm{T(\sum_{j=1}^n(2\sum_{i=1}^m h_j^i+\sum_{i=m+1}^s h_j^i-2\sum_{i=1}^m g_j^i) \otimes e_j)}
\geq \\& \geq
\norm{T(\sum_{j=1}^n(2\sum_{i=1}^m h_j^i+\sum_{i=m+1}^s h_j^i-2\sum_{i=1}^m g_j^i) \otimes e_j)(y)}
\geq \\& \geq 
\norm{T(\sum_{j=1}^n(2\sum_{i=1}^m h_j^i+\sum_{i=m+1}^s h_j^i) \otimes e_j)(y)}- 2\norm{T(\sum_{j=1}^n\sum_{i=1}^m g_j^i \otimes e_j))(y)}
 >\\&>
(s+m)(1-n\ep)-2\ep,
\end{aligned}
\end{equation}
contradicting the choice of $\epsilon$. Thus $y \in \Theta(T, (h_1^r)_{r=1}^s, \ldots , (h_n^r)_{r=1}^s, (e_j)_{j=1}^n, m, \ep)$, and we are done.
\end{proof}

Now we are ready to prove Theorem \ref{distance}.

\emph{Proof of Theorem \ref{distance}.}

\begin{proof}
We only need to prove that $\norm{T}\norm{T^{-1}} \geq \frac{2l+2-k}{k}$, since the bound $3$ follows from statements a), b) and c) of Theorem \ref{jednostranna}. Let $\A_1$ be the canonical scalar function space associated to $\H_1$.

\emph{Claim. There exists $n \in \en$ such that for each $\zeta>0$, there exist distinct points $(x_j)_{j=1}^n \in \W_{\H_1}^{(l)}$,
vectors $e_1, \ldots, e_n$ in $E_1$ with $\max_{j=1, \ldots , n} \norm{e_i}=1$, $U_1, \ldots, U_n$ pairwise disjoint open sets such that $x_j \in U_j$ for each $j=1, \ldots, n$, and functions $h_1, \ldots, h_n \in \A_1^{E_1}$ satisfying that for each $j=1, \ldots, n$, 
\[0 \leq h_j \leq 1, h_j(x_j)>1-\zeta, h_j<\zeta \text{ on } \Ch_{\H_1} K_1 \setminus U_j\] 
and such that the set $\R(\sum_{j=1}^n T(h_j \otimes e_j), \zeta) \cap \B^{(k)}$ is empty.}

Suppose first that the Claim holds, and assume for a contradiction that the isomorphism $T$ satisfies that $\norm{T^{-1}}=1$ and $\norm{T}<\frac{2l+2-k}{k}$. We find an $n \in \en$ from the Claim, and we choose $\ep \in (0, \frac{1}{n})$ satisfying

\[\ep<\frac{2l+2-k-\norm{T}k}{\norm{T}(2l+2-2k+n(6l+6-5k))+n(2l+2-k)+2} \text{, and let } \zeta = \frac{\ep}{l+1}.\]

Then we use the Claim to find all the above objects for this $\zeta$. Next, since in all cases (i), (ii) and (iii), the set $\B^{(k)} \cap \Omega_{\H_2}$ is finite, we can use Lemma \ref{system} to find functions $(h_1^r)_{r=0}^{l}, \ldots , (h_n^r)_{r=0}^{l}\in \A_1^{E_1}$ satisfying that for each $j=1, \ldots, n$ and $i=0, \ldots, l$,  
\[0 \leq h_j^i \leq 1, h_j^i<\zeta \text{ on } \Ch_{\H_1} K_1 \setminus U_j^i, h_j^i<h_j^{i+1}+\zeta \text{ for } i<l,\] 
$h_j^i(x_j^i) \geq 1-\zeta$ for some $x_j^i \in \W_{\H_1}^{(i)}$ and such that for every $y \in \B$,
\begin{equation}
\label{maxk}
\abs{\{ i \in \{0, \ldots , l\}: y \in \R(\sum_{j=1}^n T(h_j^i \otimes e_j), \zeta)\}} \leq k.
\end{equation}
Next, since $\zeta=\frac{\ep}{l+1}$, we can use Lemma \ref{nonempty} for $s=l+1$ and $m=l-k+1$ to obtain a point 
\[y \in \Theta(T, (h_1^r)_{r=0}^{l}, \ldots , (h_n^r)_{r=0}^{l}, (e_j)_{j=1}^n, l-k+1, \ep) \cap \B.\]
 By \eqref{maxk}, there are at most $k$ indices $i$ from the set $\{0, \ldots , l\}$ satisfying that $\norm{\sum_{j=1}^n T(h_j^i \otimes e_j)(y)} \geq \zeta$. Thus we can fix $i_1, \ldots, i_{l-k+1} \in \{0, \ldots, l\}$ such that for each $p=1, \ldots , l-k+1$ it holds that $\norm{\sum_{j=1}^n T(h_j^{i_p} \otimes e_j)(y)}<\zeta$. But then, since $\zeta=\frac{\ep}{l+1}$, 
 \[(h_j^{i_1}, \ldots, h_j^{i_{l-k+1}}) \in \mathcal{G}(h_j^0, \ldots, h_j^l, l-k+1, \ep)\]
 for each $j=1, \ldots, n$, see \eqref{nalezi}. 
 
 Thus by the definition of $\Theta(T, (h_1^r)_{r=0}^{l}, \ldots , (h_n^r)_{r=0}^{l}, (e_j)_{j=1}^n, m, \ep)$ we know that \[\norm{T(\sum_{j=1}^n\sum_{p=1}^{l-k+1} h_j^{i_p} \otimes e_j)(y)} \geq \ep.\]
  Consequently, we have
\[\ep \leq \norm{T(\sum_{j=1}^n\sum_{p=1}^{l-k+1} h_j^{i_p} \otimes e_j)(y)} \leq \sum_{p=1}^{l-k+1} \norm{T(\sum_{j=1}^n h_j^{i_p} \otimes e_j)(y)} < (l-k+1)\zeta<\ep.\]  
This contradiction shows that $\norm{T}\norm{T^{-1}} \geq \frac{2l+2-k}{k}$. Thus to finish the proof it is enough to prove the above Claim.

\emph{Proof of Claim.}\\
In case (i), we put $n=1$. Since $\W_{\H_1}^{(l)}$ is nonempty, we can take an arbitrary point $x_1 \in \W_{\H_1}^{(l)}$ and vector $e_1 \in S_{E_1}$, and then, since the set $\B^{(k)} \cap \Omega_{\H_2}$ is empty, the function $h_1$ exists simply by the definition of a weak peak point. 

For the case (ii), we again put $n=1$, and we pick an arbitrary $e_1 \in S_{E_1}$. Since $\W_{\H_1}^{(l)}$ is infinite, there exist pairwise disjoint open sets $\{U_m \}_{m \in \en}$ and points $\{ z_m\}_{m \in \en} \subseteq \W_{\H_1}^{(l)}$ such that $z_m \in U_m$ for each $m \in \en$. For a given $\zeta>0$ we find functions $\{g_m\}_{m \in \en} \subseteq \A_1^{E_1}$ such that for each $m \in \en$, 
\[0 \leq g_m \leq 1, g_m(z_m)>1-\frac{\zeta}{2^m} \text{ and } g_m <\frac{\zeta}{2^m} \text{ on } \Ch_{\H_1} K_1 \setminus U_m.\] Now, since $\B^{(k)} \cap \Omega_{\H_2}$ is finite,  it is enough to use Lemma \ref{finite} to obtain the function $h_1=g_m$ and the point $x_1=z_m$ for a suitable $m \in \en$.

Finally, for the case (iii) we suppose that $\W_{\H_1}^{(l)}=\{ x_1, \ldots, x_n\}$ and $\B^{(k)} \cap \Omega_{\H_2}=\{y_1, \ldots, y_m\}$. Given $\zeta>0$, we find pairwise disjoint open sets $U_1, \ldots, U_n$ such that $x_j \in U_j$ for each $j=1, \ldots, n$, and functions $h_1, \ldots, h_n$ in $\A_1^{E_1}$ satisfying that 
\[0 \leq h_j \leq 1, h_j(x_j)>1-\zeta \text{ and } h_j<\zeta \text{ on } \Ch_{\H_1} K_1 \setminus U_j, \quad j=1, \ldots, n.\]

As in the proof of Theorem \ref{jednostranna}c), we define a linear operator $S:(E_1^n, \norm{\cdot}_{\max}) \rightarrow \C(\B^{(k)} \cap \Omega_{\H_2}, E_2) \simeq (E_2^m, \norm{\cdot}_{\max})$ as
\[S(e_1, \ldots, e_n)=T(\sum_{i=1}^n h_i \otimes e_i)|_{(\B^{(k)} \cap \Omega_{\H_2})}.\] 
Then as in the proof of Theorem \ref{jednostranna}c), $S$ is bounded. Since by our assumption, $E_2^m$ does not contain an isomorphic copy of $E_1^n$, it follows that $S^{-1}$ is not bounded. Consequently, there exist nonzero vectors $e_1, \ldots, e_n \in E_1$ with $\max_{i=1, \ldots , n} \norm{e_i}=1$, such that $\norm{T(\sum_{i=1}^n h_i \otimes e_i)|_{(\B^{(k)} \cap \Omega_{\H_2})}} <\zeta$, and those are the vectors we were looking for. The proof is finished.

\end{proof}


\bibliography{iso-functions}\bibliographystyle{siam}
\end{document}